\documentclass[10pt]{amsart}

\usepackage{amssymb,array}
\usepackage{amsmath}
\usepackage{amsthm}
\usepackage{amsbsy,mathrsfs}
\usepackage{bm}
\usepackage[english]{babel}
\usepackage{graphicx}
\usepackage{color}
\usepackage{hyperref}
\usepackage[toc,page]{appendix}
\usepackage{thmtools}
\usepackage{thm-restate}
\usepackage[margin=1.2in]{geometry}
\usepackage{cleveref}
\usepackage{subfig}
\usepackage{subcaption}
\usepackage{float}
\usepackage{url}

\theoremstyle{plain}
\newtheorem{theorem}{Theorem}[section]
\newtheorem*{theorem*}{Theorem}
\newtheorem{lemma}[theorem]{Lemma}
\newtheorem{ex}[theorem]{Example}

\newtheorem{corollary}[theorem]{Corollary}
\newtheorem{proposition}[theorem]{Proposition}
\newtheorem{computational theorem}[theorem]{Computational Theorem}

\theoremstyle{definition}

\newtheorem{remark}[theorem]{Remark}

\newcommand{\R}{\mathbb{R}}

\renewcommand{\H}{{\mathbb{H}}}

\newcommand{\M}{\mathcal{M}}
\newcommand{\T}{\mathcal{T}}

\DeclareMathOperator{\tr}{\operatorname{tr}}
\DeclareMathOperator{\Hyp}{\operatorname{Hyp}}

\theoremstyle{plain} 
\newcommand{\thistheoremname}{}
\newtheorem*{genericthm*}{\thistheoremname}

\begin{document}
\title[Poisson bracket of trace functions]{Poisson bracket of trace functions on the Fuchsian locus and Wolpert's formulas}

 \author{Deblina Das}
 \address{Department of Mathematics, Indian Institute of Technology Palakkad}
 \email{212114002@smail.iitpkd.ac.in}
 \author{Arpan Kabiraj}
 \address{Department of Mathematics, Indian Institute of Technology Palakkad}
\email{arpaninto@iitpkd.ac.in}

\begin{abstract}

We develop a systematic method for computing traces of products of M\"obius transformations associated with oriented geodesics on a hyperbolic surface. The method is based on a normalization of matrices in ${SL}_2(\mathbb R)$ which expresses trace identities in terms of hyperbolic lengths, intersection angles, and signed distances along geodesics. Using these trace computations together with Goldman's description of the Atiyah-Bott-Goldman symplectic form on the character variety, we derive explicit geometric formulas for the Poisson brackets of trace functions associated with closed geodesics. More precisely, we express the Poisson bracket of two trace functions and the iterated Poisson bracket of three trace functions in terms of the hyperbolic lengths of the corresponding geodesics, their intersection angles, and the signed distances between intersection points. The results naturally lead to a unified perspective for revisiting Wolpert's cosine and sine formulas and deriving new proofs of them. 
\end{abstract}

\maketitle

\section{Introduction}

Let $\Sigma$ be a closed connected oriented surface which admits a complete hyperbolic metric. We refer to such a surface as a \emph{hyperbolic surface}, and denote by $\T$ the associated Teichm\"uller space.

A fundamental result of Wolpert \cite[Theorem 3.3]{wolpert1983symplectic} establishes a deep connection between the Weil-Petersson symplectic structure on $\T$ and the geometry of closed geodesics on $\Sigma$. Given two closed curves on $\Sigma$, Wolpert showed that the Poisson bracket of their geodesic length functions is expressed as a sum over their intersection points, where each term involves the cosine of the angle between the corresponding geodesic representatives. This result is now known as \emph{Wolpert's cosine formula}. He also discovered a formula for the iterated Poisson bracket of length functions of three geodesics known as \emph{Wolpert's sine formula} (\cite[Theorem 3.4]{wolpert1983symplectic}).

 Goldman \cite{MR762512, goldman_invariant_1986} introduced a far-reaching generalization of this picture. For a broad class of Lie groups $G$, he endowed the $G$-character variety of $\pi_1(\Sigma)$ with a natural Poisson structure called the \emph{Atiyah-Bott-Goldman} Poisson structure. Given a conjugation invariant function on $G$, each free homotopy class of closed curves on $\Sigma$ determines a conjugation invariant function on the character variety. For some special classes of classical linear Lie groups $G$ and trace function on $G$, Goldman gave an explicit description of the  Poisson bracket of such functions. 
 He further showed that, for $G = {GL}_n(\mathbb{R})$ and ${SL}_2(\mathbb{R})$, the Poisson bracket admits a purely topological description.

Subsequent developments have significantly deepened this picture. Turaev \cite{turaev1991skein} introduced a Lie cobracket on curves, leading to a Lie bialgebra structure closely related to skein algebras (see also \cite{hoste1990homotopy}, \cite{MR2041630}). From a string-topological perspective, Chas and Sullivan \cite{chas1999stringtopology} interpreted the Goldman bracket as a manifestation of loop product structures. Kawazumi and Kuno \cite{MR3497295}, \cite{MR3449595}, \cite{MR3283405} developed a homological version of the Goldman Lie algebra and studied its action on the fundamental group and its completions, providing a powerful bridge between topology, geometry, and representation theory.

 These viewpoints suggest that Wolpert’s formulas may be interpreted as geometric manifestations of the Goldman bracket, realized through suitable representation-theoretic frameworks. In particular, when $G = {PSL}_2(\mathbb{R})$, the character variety is canonically related to Teichm\"uller space, and trace functions encode hyperbolic length functions. Thus, the Weil-Petersson Poisson bracket may be interpreted through the Goldman bracket via trace identities. Indeed, Goldman used this analog to obtain a proof of Wolpert's cosine formula \cite[Section 3.9]{goldman_invariant_1986}. 

The goal of this paper is to investigate this viewpoint in detail. More precisely, we develop a systematic method to compute trace identities associated with products of suitably normalized M\"{o}bius transformations arising from oriented geodesics on a hyperbolic surface. We express these traces explicitly in terms of geometric data: the translation lengths of the corresponding hyperbolic elements, the intersection angles of their axes, and the signed distances between relevant intersection points.
We summarize the main results of our paper in the following theorems. For the definitions of the terms appearing in the theorems, see Section \ref{sec:basic}.

\begin{theorem*}[\ref{thm:cosine trace}]
    Suppose $x$ and $y$ are two intersecting geodesics  on $\Sigma$ with $\ell_x=a$ and $\ell_y=b$. For the Atiyah-Bott-Goldman poisson bracket on the Fuchsian locus of $SL_2(\R)$ character variety we have $$\{\tr_x,\tr_y\}=  2\sinh\frac{a}{2}\sinh \frac{b}{2}\sum_{p\in x\cap y}  \cos\theta_p.$$
\end{theorem*}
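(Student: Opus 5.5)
Our plan is to start from Goldman's product formula for the Atiyah--Bott--Goldman bracket of $SL_2(\R)$ trace functions,
$$\{\tr_x,\tr_y\}=\sum_{p\in x\cap y}\epsilon_p\Big(\tr(X Y_p)-\tfrac12\tr X\,\tr Y\Big),$$
where $X,Y_p\in SL_2(\R)$ are lifts of the holonomies of $x$ and of $y$ based at $p$ (so that the axes of $X$ and $Y_p$ meet at a lift of $p$), and $\epsilon_p=\pm1$ is the oriented intersection sign. With the normalization above, the term $\tfrac12\tr X\tr Y$ is the trace-form contraction $B(\mathfrak X,\mathfrak Y)$ of the projections of $X,Y_p$ to $\mathfrak{sl}_2$. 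On the Fuchsian locus we choose lifts with positive trace, so that $\tr X=2\cosh\frac a2$ and $\tr Y=2\cosh\frac b2$. The problem is then reduced to computing a single term $\tr(XY_p)$ at one intersection point from geometric data.

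Fix an intersection point $p$ and conjugate in $SL_2(\R)$ so that the lift $\tilde p$ is $i\in\H$ and the axis of $X$ is the imaginary axis, oriented upward. Then
$$X=\begin{pmatrix}e^{a/2}&0\\0&e^{-a/2}\end{pmatrix},$$
and $Y_p=R_\theta X' R_{-\theta}$. Here $X'$ is the diagonal matrix of translation length $b$, and $R_\theta$ is the elliptic rotation about $i$ that turns the imaginary axis onto the oriented axis of $y$ at angle $\theta_p$. This is the normalization of the paper, in which the signed distance along $x$ between the relevant points is zero, because both axes pass through $i$. A direct $2\times2$ computation should give
$$\tr(XY_p)=2\cosh\tfrac a2\cosh\tfrac b2+2\epsilon'\sinh\tfrac a2\sinh\tfrac b2\cos\theta_p.$$
The sign $\epsilon'$ depends on the rotation convention, since rotation of $\H$ by $\theta$ corresponds to $R_{\theta/2}$ in $SL_2$. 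Expanding, $R_{\theta/2}\,\mathrm{diag}(e^{b/2},e^{-b/2})R_{-\theta/2}$ has diagonal entries $\cosh\frac b2\pm\sinh\frac b2\cos\theta$. Pairing these with the diagonal of $X$ gives $2\cosh\frac a2\cosh\frac b2+2\sinh\frac a2\sinh\frac b2\cos\theta$.

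Subtracting $\tfrac12\tr X\tr Y=2\cosh\frac a2\cosh\frac b2$ leaves $2\sinh\frac a2\sinh\frac b2\cos\theta_p$ at each intersection point, with the sign $\epsilon_p$ still attached. Summing over $p\in x\cap y$ then gives the formula once the signs are matched.

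The main obstacle is this sign bookkeeping. We must check that $\epsilon_p$ in Goldman's formula combines with the convention for $\theta_p$ from Section~\ref{sec:basic} so that every term comes out as $+\cos\theta_p$. Concretely, the angle must be measured from $x$ to $y$ counterclockwise for a positive crossing. At a negative crossing, the same counterclockwise measurement gives the supplementary angle $\pi-\theta$. That flips the sign of the cosine, and this flip cancels $\epsilon_p=-1$. We would verify this by treating the two local configurations separately, using the orientation of $\H$ in our normalization. We would also check that the choice of positive-trace lifts is consistent: if either lift is negated, $\tr(XY_p)$ and $\tr X\tr Y$ change sign together, so the bracket is unchanged.
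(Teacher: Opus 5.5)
Your argument is the paper's own. Your form of Goldman's formula is Theorem~\ref{thm:Goldman} rewritten using $\tr(XY_p^{-1})=\tr X\,\tr Y-\tr(XY_p)$. The normalization ($\tilde p=i$, the axis of $X$ on the imaginary axis, $Y_p$ a rotation-conjugate of a diagonal matrix) and the resulting identity $\tr(XY_p)=2\cosh\frac a2\cosh\frac b2+2\sinh\frac a2\sinh\frac b2\cos\theta$ are exactly Lemma~\ref{lem:double} and Corollary~\ref{cor:double}; the paper phrases this as $\tfrac12\tr(N_XN_Y)=\epsilon_p(T_A,T_B)\cos\theta_p$. Your matching of $\epsilon_p$ with $\theta_p$ is also right. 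One small misstatement: the trace-form contraction of the $\mathfrak{sl}_2$-projections is the whole difference $\tr(XY_p)-\tfrac12\tr X\tr Y=\tr\big((X-\tfrac12\tr X\, I)(Y_p-\tfrac12\tr Y\, I)\big)$, not $\tfrac12\tr X\tr Y$.

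The step that fails is your last sentence. If one lift is negated, $\tr(XY_p)$ and $\tfrac12\tr X\tr Y$ change sign together. Their difference therefore changes sign, and so does every term of Goldman's sum: the bracket is multiplied by $-1$, not left unchanged.

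Nor can the signs be chosen curve by curve. They are fixed by the lifted homomorphism $\tilde\Phi$. Changing $\tilde\Phi$ by a character $\chi\colon\pi_1\to\{\pm1\}$ multiplies $\{\tr_x,\tr_y\}$ by $\chi(x)\chi(y)$, while $a$, $b$ and the $\theta_p$ stay the same. For instance, the boundary of an embedded one-holed torus has trace $<-2$ in every lift. So what your computation actually proves is
\[
\{\tr_x,\tr_y\}=2\varepsilon_x\varepsilon_y\sinh\frac a2\sinh\frac b2\sum_{p\in x\cap y}\cos\theta_p .
\]
This is the displayed formula precisely when $\varepsilon_x\varepsilon_y=1$. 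Equivalently, the sign-free formula holds unconditionally for the functions $|\tr_x|=2\cosh\frac{\ell_x}{2}$.

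This is also the only setting in which the paper uses the result: it sets $\varepsilon_x=\varepsilon_y=1$ in the proof of Theorem~\ref{thm:cosine form}. The paper's proof reaches a sign-free answer only by writing $\tr_{x*_py}$ as $\varepsilon_{x*_py}\tr(AB)$. That double-counts the sign when $A,B$ are the lifted matrices, so you should not try to reproduce that step. Instead, state $\varepsilon_x\varepsilon_y=1$ (e.g.\ positive-trace lifts) as a hypothesis rather than trying to argue it away.
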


\begin{theorem*}[\ref{thm:sine trace}]
    Suppose $x,y$ and $z$ are three closed geodesics on $\Sigma$ with $\ell_x=a$, $\ell_y=b$ and $\ell_z=c$. Denote $C_u=\cosh\frac u2$ and $S_u=\sinh\frac u2.$  For the Atiyah-Bott-Goldman Poisson bracket on the Fuchsian locus of $SL_2(\R)$ character variety we have
    \begin{align*}
        &\{\tr_z, \{\tr_x, \tr_y\}\}\\ = & \ -2C_aS_b S_c\sum_{p\in x\cap y,q\in z\cap x}\cos\theta_p\cos\theta_q-  2 S_aC_bS_c\sum_{r\in x\cap y,s\in z\cap y}\cos\theta_r\cos\theta_s\\
 &- 2S_aS_bS_c\sum_{p\in x\cap y,q\in z\cap x}\frac{\cosh\big(\ell_{pq}-\frac{a}{2}\big)}{S_a}\sin\theta_p\sin\theta_q
 + 2S_aS_bS_c\sum_{r\in x\cap y,s\in z\cap y}\frac{\cosh\big(\ell_{rs}-\frac{b}{2}\big)}{S_b}\sin\theta_r\sin\theta_s
    \end{align*}
  where $\ell_{pq}$ (respectively $\ell_{rs}$) denotes the distance from $p$ to $q$ (respectively $r$ to $s$) along $x$ (respectively along $y$). 
\end{theorem*}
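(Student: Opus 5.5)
We prove the sine-type formula for $\{\tr_z,\{\tr_x,\tr_y\}\}$ by applying Goldman's product formula twice and evaluating each resulting trace geometrically. Throughout, we use the normalization of $SL_2(\R)$ matrices from Section~\ref{sec:basic} and the trace identity underlying Theorem~\ref{thm:cosine trace}.

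\textbf{First bracket.} Goldman's formula for $SL_2(\R)$ with the trace function gives
\[
\{\tr_x,\tr_y\}=\sum_{p\in x\cap y}\epsilon_p\Big(\tr(x_py_p)-\tfrac12\tr x\,\tr y\Big),
\]
where $x_p, y_p$ are the based loops at $p$ and $\epsilon_p$ is the oriented intersection sign. From the proof of Theorem~\ref{thm:cosine trace}, each summand equals $2S_aS_b\cos\theta_p$. The key point is that this summand is itself a trace function, namely $\tr_{x_py_p}$ minus a multiple of $\tr_x\tr_y$.

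\textbf{Second bracket.} By the Leibniz rule,
\[
\{\tr_z,\{\tr_x,\tr_y\}\}=\sum_{p}\epsilon_p\Big(\{\tr_z,\tr_{x_py_p}\}-\tfrac12\tr_y\{\tr_z,\tr_x\}-\tfrac12\tr_x\{\tr_z,\tr_y\}\Big).
\]
The last two terms are handled directly by Theorem~\ref{thm:cosine trace}. Since $\tr x=2C_a$ and $\tr y=2C_b$, they produce contributions proportional to $C_bS_cS_a\cos\theta_p\cos\theta_q$ and $C_aS_cS_b\cos\theta_r\cos\theta_s$.

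For the main term, we represent $x_py_p$ by a piecewise geodesic loop: run along $x$ from $p$ back to $p$, then along $y$. Its intersections with $z$ are exactly the points $q\in z\cap x$ and $s\in z\cap y$, each counted with the sign inherited from $z\cap x$ or $z\cap y$. Goldman's formula then gives
\[
\{\tr_z,\tr_{x_py_p}\}=\sum_q\epsilon_q\Big(\tr(z_q\,w_q)-\tfrac12\tr z\,\tr(x_py_p)\Big)+\sum_s(\cdots),
\]
where $w_q$ is the loop $x_py_p$ rebased at $q$. It is a triple product that, after conjugation, reads $x^{(q\to p)}\,y\,x^{(p\to q)}$.

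\textbf{Triple-product traces.} We compute $\tr(z\,x\,y)$-type traces using the normalized matrices. Put $x$ as a diagonal translation of length $a$ along the imaginary axis. Place $z$ through the point at signed distance $\ell_{pq}$ along $x$ making angle $\theta_q$, and $y$ through $p$ making angle $\theta_p$. Multiplying out yields terms in $C_aC_bC_c$, terms involving $\cos\theta_p\cos\theta_q$, and a cross term of the form
\[
\sin\theta_p\sin\theta_q\big(\cosh(\ell_{pq})-\cosh(\ell_{pq}-a)\big)\ \text{or similar}.
\]
Subtracting $\tfrac12\tr z\,\tr(x_py_p)$ and combining with the terms from the previous step should leave $-2C_aS_bS_c\cos\theta_p\cos\theta_q$. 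Via the identity $\cosh u-\cosh(u-a)=2\sinh\frac a2\sinh(u-\frac a2)$, or its cosh analogue, the cross term should become $-2S_bS_c\cosh(\ell_{pq}-\frac a2)\sin\theta_p\sin\theta_q$.

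\textbf{Main obstacle.} The hardest part is bookkeeping. We must fix the signed distance $\ell_{pq}$ consistently and choose which side of the cut at $p$ the loop $x_p$ starts, so that $\ell_{pq}\in(0,a)$ is symmetric under $\ell\mapsto a-\ell$. This symmetry is what produces $\cosh(\ell_{pq}-\frac a2)$. We must also track the signs $\epsilon_p\epsilon_q$ against $\sin\theta$, since $\epsilon$ is absorbed when angles are measured counterclockwise. The asymmetric sign between the $x$-sum and the $y$-sum comes from $z$ meeting the first versus the second factor of $x_py_p$. We will verify the normalization by a sanity check: the result should be antisymmetric under $x\leftrightarrow y$.
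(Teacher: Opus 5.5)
Your skeleton is the paper's: you apply Goldman's formula twice, using the loop-product representative of $x_py_p$ so that its intersections with $z$ are the points of $z\cap x$ and $z\cap y$ with signs $\epsilon_q(z,x)$ and $\epsilon_s(z,y)$, and then compute in normalized $SL_2(\R)$ coordinates. Writing the bracket as $\tr(x_py_p)-\frac12\tr_x\tr_y$ and using Leibniz, instead of $\frac12(\tr_{x*_py}-\tr_{x*_py^{-1}})$, is a legitimate variant. For a pair $(p,q)$ with $q\in z\cap x$ your integrand is $\epsilon_p(x,y)\epsilon_q(z,x)\big(\tr(CAB)-\frac12\tr C\,\tr(AB)-\frac12\tr B\,\tr(CA)+\frac14\tr A\,\tr B\,\tr C\big)$. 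By $\tr(XY^{-1})=\tr X\,\tr Y-\tr(XY)$ this is exactly one quarter of the paper's $\tr(CAB)-\tr(CB^{-1}A^{-1})-\tr(CAB^{-1})+\tr(CBA^{-1})$, and it spares you the sign analysis for $x*_py^{-1}$. But the substance of the theorem is the evaluation of this integrand, and there your proposal only says what ``should'' happen. Two of its predictions are wrong, and the $y$-side is never set up.

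First, the Leibniz terms do not produce $C_bS_aS_c\cos\theta_p\cos\theta_q$. Summed over $p$, $-\frac12\tr_y\{\tr_z,\tr_x\}$ equals $2S_aC_bS_c\big(\sum_p\epsilon_p(x,y)\big)\big(\sum_{q\in z\cap x}\cos\theta_q\big)$. There is no $\cos\theta_p$, and $C_b$ is attached to the $z\cap x$ sum, whereas the theorem attaches $C_a$ to it. Their role is to cancel: expanding $A=C_aI+S_aN_X$ and so on, the integrand collapses to $\epsilon_p(x,y)\epsilon_q(z,x)\big(C_aS_bS_c\tr(N_YN_Z)+S_aS_bS_c\tr(N_XN_YN_Z)\big)$, because the Leibniz term exactly kills the $S_aC_bS_c\tr(N_XN_Z)$ part of the main term.

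Second, what you then need is Lemma~\ref{lem:triplec1}: $\tr(N_YN_Z)=2\epsilon_p(x,y)\epsilon_q(x,z)(\cos\theta_p\cos\theta_q+\cosh\ell_{pq}\sin\theta_p\sin\theta_q)$ and $\tr(N_XN_YN_Z)=-2\epsilon_p(x,y)\epsilon_q(x,z)\sinh\ell_{pq}\sin\theta_p\sin\theta_q$. This holds with the lift of $q$ on the positive side of $p$ along $T_A$ and the product ordered as $CAB$. Both choices are forced by rebasing $x_py_p$ at $q$ and must be checked, since getting either wrong replaces $\cosh(\ell_{pq}-\frac a2)$ by something else. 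The factor $\cosh(\ell_{pq}-\frac a2)$ comes from $C_a\cosh\ell_{pq}-S_a\sinh\ell_{pq}$, and the overall minus sign from $\epsilon_q(z,x)\epsilon_q(x,z)=-1$. The cross term $\cosh\ell_{pq}-\cosh(\ell_{pq}-a)$ you anticipate never appears, and the identity you quote yields a $\sinh$, not the needed $\cosh$.

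For $s\in z\cap y$ you need the separate configuration of Lemma~\ref{lem:triplec2}. After rebasing, the lift of $s$ sits at distance $\ell_{sr}=b-\ell_{rs}$ on the negative side of $r$ along $T_B$. The Leibniz term now cancels the $C_aS_bS_c\tr(N_YN_Z)$ part, and this configuration is what produces the opposite sign of the last sum.

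Finally, you silently set $\tr_x=2C_a$, but on the Fuchsian locus $\tr_x=2\varepsilon_xC_a$. You should either restrict to positive-trace lifts or carry the signs; in your route they factor out as an overall $\varepsilon_x\varepsilon_y\varepsilon_z$.
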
 
The Atiyah-Bott-Goldman symplectic form is a scalar multiple of the Weil-Petersson symplectic form \cite{MR762512}. Therefore, the above results naturally lead to a unified perspective for revisiting Wolpert's cosine and sine formulas and deriving new proofs of them. 

\begin{theorem*}[\ref{thm:cosine form} Wolpert's cosine formula]  Suppose $x$ and $y$ are two closed geodesics on $\Sigma$. For the Atiyah-Bott-Goldman Poisson bracket on the Teichm\"{u}ller space  we have
$$\{\ell_x,\ell_y\}=2\sum\limits_{p\in x\cap y}\cos{\theta}_p.$$
\end{theorem*}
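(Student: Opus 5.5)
The plan is to deduce Wolpert's cosine formula directly from Theorem \ref{thm:cosine trace} by a change of variables on the Fuchsian locus. On the Fuchsian component we lift the holonomy to $SL_2(\R)$ so that each hyperbolic element has positive trace. The trace function of a closed geodesic is then related to its length by
$$\tr_x = 2\cosh\frac{\ell_x}{2}.$$
We identify $\T$ with this component of the character variety and view $\ell_x = 2\cosh^{-1}(\tr_x/2)$ as a smooth function of $\tr_x$ there. This is legitimate because $\tr_x>2$ on the Fuchsian locus, so $\cosh^{-1}$ is smooth at the relevant values.

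The key step is the chain rule for Poisson brackets. For smooth functions $f,g$ of one variable we have $\{f(F),g(G)\}=f'(F)g'(G)\{F,G\}$, since the bracket is a biderivation. With $f(t)=g(t)=2\cosh^{-1}(t/2)$ we get
$$f'(t)=\frac{1}{\sqrt{t^2/4-1}}\cdot 1=\frac{1}{\sinh(\ell/2)} \quad\text{at } t=2\cosh(\ell/2).$$
Hence
$$\{\ell_x,\ell_y\}=\frac{1}{\sinh\frac{a}{2}\sinh\frac{b}{2}}\{\tr_x,\tr_y\},$$
where $a=\ell_x$ and $b=\ell_y$. Substituting the formula of Theorem \ref{thm:cosine trace}, the factor $2\sinh\frac a2\sinh\frac b2$ cancels and leaves $2\sum_{p\in x\cap y}\cos\theta_p$.

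Two points need checking. The first is the non-intersecting case. If $x\cap y=\emptyset$, the representatives can be taken disjoint, and Goldman's formula gives $\{\tr_x,\tr_y\}=0$ because there are no intersection terms. Both sides then vanish, consistent with the empty sum. The second is that Theorem \ref{thm:cosine trace} must be applied to the same Poisson structure on $\T$ that we use for the lengths, so the normalization of the Atiyah–Bott–Goldman bracket on the Fuchsian locus has to match the one used in Theorem \ref{thm:cosine trace}. If one wants the Weil–Petersson version, the scalar relating the two symplectic forms \cite{MR762512} would rescale the constant, so the statement should be read with the Atiyah–Bott–Goldman bracket, as written.

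The main (mild) obstacle is justifying the sign and lift conventions: that $\tr_x=+2\cosh(\ell_x/2)$ rather than $-2\cosh(\ell_x/2)$ for the chosen lift, and that the angle $\theta_p$ in Theorem \ref{thm:cosine trace} is measured with the same orientation convention as in Wolpert's statement. A sign flip of a trace would flip the sign of $f'$ and hence of the bracket. I would fix the lift by choosing all hyperbolic representatives with positive trace, which is consistent because on the Fuchsian locus the representation lifts to $SL_2(\R)$ with all nontrivial elements of positive trace after a choice of spin structure. The trace function depends on the lift only up to sign, and $\ell_x$ is insensitive to that sign.
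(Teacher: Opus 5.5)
Your argument is the paper's argument. Both relate $\{\ell_x,\ell_y\}$ to $\{\tr_x,\tr_y\}$ by the chain rule for the Poisson bracket (a biderivation) and then invoke Theorem~\ref{thm:cosine trace}. You differentiate $t\mapsto 2\cosh^{-1}(t/2)$ where the paper differentiates $\ell\mapsto 2\cosh(\ell/2)$, which is the same computation read in the other direction.

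The one step that fails as written is your justification of the sign normalization. No lift of a Fuchsian representation of a closed surface group has all traces positive. Two lifts differ by a homomorphism $\chi\colon\pi_1\to\{\pm 1\}$, which is trivial on commutators. For a separating simple closed curve such as $x=a_1b_1a_1^{-1}b_1^{-1}$, the images of $a_1,b_1$ generate a Fuchsian one-holed torus group, so $\tr_x<-2$ for every lift. For such $x$ you therefore cannot arrange $\ell_x=2\cosh^{-1}(\tr_x/2)$; the paper's standing assumption that all matrices have positive trace has the same defect.

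The repair is easy and makes the normalization unnecessary:
\begin{itemize}
\item On the connected Fuchsian locus $\tr_x$ has constant sign $\varepsilon_x$, so $\ell_x=2\cosh^{-1}(\varepsilon_x\tr_x/2)$. Your chain rule then gives $\{\ell_x,\ell_y\}=\varepsilon_x\varepsilon_y\{\tr_x,\tr_y\}/(S_aS_b)$.
\item On the other side, $\tr_{x*_py}$ and $\tr_{x*_py^{-1}}$ evaluate to exactly $\tr(AB)$ and $\tr(AB^{-1})$, with no extra sign. Theorem~\ref{thm:Goldman} and Corollary~\ref{cor:double} therefore give $\{\tr_x,\tr_y\}=2\varepsilon_x\varepsilon_y S_aS_b\sum_{p\in x\cap y}\cos\theta_p$.
\item The signs cancel, and you obtain $2\sum_{p}\cos\theta_p$ for any lift.
\end{itemize}
You need this signed form of the trace bracket. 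Pairing the sign-corrected chain rule with the unsigned statement of Theorem~\ref{thm:cosine trace} would give the wrong sign whenever $\varepsilon_x\varepsilon_y=-1$.
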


\begin{theorem*}[\ref{thm:sine form} Wolpert's sine formula]
   Suppose $x,y$ and $z$ are three closed geodesics on $\Sigma$ with $\ell_x=a$, $\ell_y=b$ and $\ell_z=c$. For the Atiyah-Bott-Goldman poisson bracket on the Teichm\"{u}ller space  we have $$\{\ell_z,\{\ell_x,\ell_y\}\}=- 2\sum_{p\in x\cap y,q\in z\cap x}\frac{\cosh\big(\ell_{pq}-\frac{a}{2}\big)}{S_a}\sin\theta_p\sin\theta_q
 + 2\sum_{r\in x\cap y,s\in z\cap y}\frac{\cosh\big(\ell_{rs}-\frac{b}{2}\big)}{S_b}\sin\theta_r\sin\theta_s.$$
\end{theorem*}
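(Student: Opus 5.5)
We derive Wolpert's sine formula from the trace version (Theorem \ref{thm:sine trace}), using the Leibniz rule to pass from trace functions to length functions. On the Fuchsian locus each closed geodesic $u$ has $\tr_u = 2\cosh\frac{\ell_u}{2}$, choosing lifts to $SL_2(\R)$ with positive trace. Hence $\ell_u = 2\,\mathrm{arccosh}(\tr_u/2)$, a smooth function of $\tr_u$ wherever $\ell_u>0$, with
$$d\ell_u = \frac{d\tr_u}{S_u}, \qquad \text{where } S_u=\sinh\tfrac{\ell_u}{2}.$$
By the derivation property of the Poisson bracket, $\{f(\tr_x),g(\tr_y)\} = f'(\tr_x)g'(\tr_y)\{\tr_x,\tr_y\}$. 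Since the Atiyah-Bott-Goldman form is a scalar multiple of the Weil-Petersson form, it is enough to carry out the computation for the Atiyah-Bott-Goldman bracket.

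\textbf{Step 1: the single bracket.} We first record
$$\{\ell_x,\ell_y\} = \frac{\{\tr_x,\tr_y\}}{S_aS_b} = 2\sum_{p\in x\cap y}\cos\theta_p,$$
which follows from Theorem \ref{thm:cosine trace}. This is Theorem \ref{thm:cosine form}.

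\textbf{Step 2: the iterated bracket.} Write $\{\ell_x,\ell_y\} = F\,\{\tr_x,\tr_y\}$ with $F = (S_aS_b)^{-1}$, viewed as a function of $\tr_x$ and $\tr_y$. Then
$$\{\ell_z,\{\ell_x,\ell_y\}\} = \frac{1}{S_c}\Big(F\,\{\tr_z,\{\tr_x,\tr_y\}\} + \{\tr_x,\tr_y\}\,\{\tr_z,F\}\Big).$$
The second term is computed from the chain rule. Since $S_a = \sqrt{(\tr_x/2)^2-1}$, we have
$$\frac{\partial S_a^{-1}}{\partial \tr_x} = -\frac{\tr_x}{4S_a^3} = -\frac{C_a}{2S_a^3}.$$
Therefore
$$\{\tr_z,F\} = -\frac{C_a}{2S_a^3S_b}\{\tr_z,\tr_x\} - \frac{C_b}{2S_aS_b^3}\{\tr_z,\tr_y\}.$$
Substituting Theorem \ref{thm:cosine trace} for $\{\tr_z,\tr_x\}$, $\{\tr_z,\tr_y\}$ and $\{\tr_x,\tr_y\}$, this term becomes
$$\frac{1}{S_c}\{\tr_x,\tr_y\}\{\tr_z,F\} = -2\frac{C_a}{S_a}\sum_{p,q}\cos\theta_p\cos\theta_q - 2\frac{C_b}{S_b}\sum_{r,s}\cos\theta_r\cos\theta_s.$$
Here the sums run over $p\in x\cap y$, $q\in z\cap x$, $r\in x\cap y$ and $s\in z\cap y$.

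\textbf{Step 3: cancellation.} Dividing the formula of Theorem \ref{thm:sine trace} by $S_aS_bS_c$ turns its first two sums into
$$-2\frac{C_a}{S_a}\sum\cos\theta_p\cos\theta_q \quad\text{and}\quad -2\frac{C_b}{S_b}\sum\cos\theta_r\cos\theta_s.$$
These are exactly the terms produced in Step 2, up to sign. So I expect them to cancel, leaving the two sine sums as stated.

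\textbf{Main obstacle: signs.} The delicate point is making this cancellation come out exactly. I expect it to require two sign conventions.

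First, antisymmetry: the sum $\sum_{q\in z\cap x}\cos\theta_q$ appearing in Theorem \ref{thm:sine trace} must be the expansion of $\{\tr_z,\tr_x\}$, not of $\{\tr_x,\tr_z\}$. This means checking that the angle $\theta_q$ in Theorem \ref{thm:sine trace} is measured from $z$ to $x$, consistently with the convention of Section \ref{sec:basic}.

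Second, the order of terms in the Leibniz expansion must be the right one, i.e. $\{\tr_z, F\,G\} = F\{\tr_z,G\} + G\{\tr_z,F\}$ with the same bracket orientation throughout.

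With these conventions, the terms from Step 2 carry the opposite sign to those of Theorem \ref{thm:sine trace}, and they cancel. If a sign discrepancy does appear, the first thing I would do is recheck the orientation of $\theta_q$ against the convention of Theorem \ref{thm:sine trace}. The cosine terms in Theorem \ref{thm:sine trace} are exactly the kind that arise from differentiating $1/S_aS_b$, so cancellation is expected once the conventions are consistent.
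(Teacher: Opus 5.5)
Your route is the paper's: write $\ell_u$ as a function of $\tr_u$, expand the iterated bracket with the chain and Leibniz rules, substitute the cosine formula and Theorem~\ref{thm:sine trace}, and cancel the $\cos\theta\cos\theta$ terms. The paper expands $\{\tr_z,\{\tr_x,\tr_y\}\}$ in length brackets and compares; you expand $\{\ell_z,\{\ell_x,\ell_y\}\}$ in trace brackets, which is equivalent.

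\textbf{The cancellation fails as you set it up.} In Step 2 you used $\{\tr_z,\tr_x\}=2S_aS_c\sum_q\cos\theta_q$ and $\{\tr_z,\tr_y\}=2S_bS_c\sum_s\cos\theta_s$, and obtained $-2\frac{C_a}{S_a}\sum\cos\theta_p\cos\theta_q-2\frac{C_b}{S_b}\sum\cos\theta_r\cos\theta_s$. Step 3 gives exactly the same expression with the same signs. By your own Step 2 formula, $\{\ell_z,\{\ell_x,\ell_y\}\}$ is the \emph{sum} of the two contributions, so these terms double instead of cancelling. You would be left with spurious terms $-4\frac{C_a}{S_a}\sum\cos\theta_p\cos\theta_q-4\frac{C_b}{S_b}\sum\cos\theta_r\cos\theta_s$. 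Saying the terms agree ``up to sign'' does not help, since cancellation needs opposite signs. Your ``main obstacle'' paragraph then asks for exactly the convention that produces the doubling ($\theta_q$ measured from $z$ to $x$, so the sum expands $\{\tr_z,\tr_x\}$). It asserts, without computing, that the signs come out opposite. The worry about the order of the Leibniz expansion is a red herring: $\{h,FG\}=F\{h,G\}+G\{h,F\}$ involves no sign choice.

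\textbf{The missing fact is the angle convention behind Theorem~\ref{thm:sine trace}.} In Case I (Lemma~\ref{lem:triplec1}), $\theta_q$ is measured anticlockwise from $T_A$ to $T_C$, i.e.\ from $x$ to $z$. In Case II, $\theta_s$ is measured from $y$ to $z$. Swapping the order of two geodesics replaces the angle $\theta\in(0,\pi)$ by $\pi-\theta$; this is what makes Theorem~\ref{thm:cosine trace} antisymmetric. Hence
\[
\{\tr_z,\tr_x\}=-2S_aS_c\sum_{q}\cos\theta_q,\qquad \{\tr_z,\tr_y\}=-2S_bS_c\sum_{s}\cos\theta_s .
\]
With these, your Step 2 gives $+2\frac{C_a}{S_a}\sum\cos\theta_p\cos\theta_q+2\frac{C_b}{S_b}\sum\cos\theta_r\cos\theta_s$. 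This exactly cancels the first two sums of Theorem~\ref{thm:sine trace} divided by $S_aS_bS_c$, leaving the two sine sums as stated. It is the same sign the paper inserts when it substitutes $\{\ell_z,\ell_x\}=-\{\ell_x,\ell_z\}=-2\sum_{q\in x\cap z}\cos\theta_q$ in its proof.
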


For alternative approaches to Wolpert’s formulas, together with related results and applications, we refer the reader to \cite{MR3359033}, \cite{Gendulphe2015}, \cite{MR615620}, \cite{MR690845}, \cite{MR1810217}, \cite{MR4911761}, \cite{ablondi2026rhamperspectivesymplecticgeometry}, \cite{MR4149828}, \cite{MR4797740}, \cite{MR4108617}, \cite{MR4959391}, \cite{andersen2021kontsevichgeometrycombinatorialteichmuller}.

\subsection{Organization of the paper} In section \ref{sec:basic} we recall the basic preliminaries. In section \ref{sec:trace id} we discuss the trace of the product of two and three isometries and develop essential lemmas required to prove our main theorems. Finally in Section \ref{sec:Poisson bracket of trace and length functions} we prove Theorem \ref{thm:sine trace}, \ref{thm:cosine trace}, \ref{thm:sine form}, \ref{thm:cosine form}.

\section{Acknowledgment} The authors would like to thank Prof. Nariya Kawazumi for carefully reading our manuscript and for several helpful comments and suggestions.

\section{Preliminaries}\label{sec:basic}
Let $\Sigma$ be a closed connected oriented surface with a complete hyperbolic metric. We call such a surface a \emph{hyperbolic surface}. The universal cover of $\Sigma$ is the upper half plane $\mathbb H$ with the Poinca\'{r}e metric and we fix the orientation of $\H$ to be anti-clockwise orientation. The isometry group of $\H$ is naturally identified with the group $PSL_2(\mathbb R)$ where the elements act as M\"{o}bius transformations. We also consider matrices of $SL_2(\mathbb R)$ acting on $\H$ by M\"{o}bius transformations. Trace of a matrix $A$ is denoted by $\tr(A)$.  We call a matrix $A\in SL_2(\mathbb R)$ \emph{hyperbolic} if $|\tr (A)|>2$ and denote the set of all hyperbolic matrices by  $\Hyp=\{A\in SL_2(\mathbb R): |\tr A|>2\}.$ For any $A\in SL_2(\mathbb R)$ corresponding to a hyperbolic isometry, the geodesic axis of the corresponding M\"{o}bius transformation is denoted by $T_A$. 

We denote the fundamental group $\pi_1(\Sigma)$ of $\Sigma$ simply by $\pi_1$. There is a one-to-one correspondence between the conjugacy classes of elements of   $\pi_1$ and the free homotopy classes of oriented closed curves on $\Sigma$. We use this correspondence throughout the paper without explicitly mentioning it, and denote both sets by $\pi$. Each free homotopy class of a closed oriented curve $x$ has a unique geodesic in its free homotopy class whose length is denoted by $\ell_x$. Denote by $\T$ the Teichm{\"u}ller space associated with the surface $\Sigma$. 

Given any reductive linear Lie group $G$, let $M_G=Hom(\pi_1,G)/G$ be the $G$-character variety where the quotient is by conjugation. In \cite{MR762512} Goldman proved that $M_G$ admits a symplectic from called the \emph{Atiyah-Bott-Goldman} symplectic form  which is a generalization of the classical  Atiyah-Bott and Weil-Petersson symplectic structures.  For our discussion, we consider two deformation spaces:\begin{itemize}
    \item $\M_W=Hom(\pi_1,PSL_2(\mathbb R))/PSL_2(\mathbb R)$
    \item $\M=Hom(\pi_1,SL_2(\mathbb R))/SL_2(\mathbb R)$
\end{itemize} 

The Teichm{\"u}ller space $\T$ is naturally identified as a subset of $\M_W$. 
For each element $\Phi\in \M_W$ we choose a lift $\tilde{\Phi}\in \M$ which varies continuously on $\T$. We identify $\T$ as a subset of $\M$ using this lift (see \cite{abikoff1983lifting} for details).

Given any $x\in \pi_1$, we have natural trace maps $\tr_x$ from $\M$ to $\mathbb R$ defined by $\tr_x(\Phi)=\text{trace} (\Phi(x))$. For $\M_W$ we have the $|\tr_x|$ map defined by the same function. Because trace is conjugacy invariant, $\tr_x$ depends only on the conjugacy class of $x$.  On the Teichm{\"u}ller space $\T$, we have the corresponding length function $\ell_x$ defined by $\ell_x(\Phi)=$ length of the unique geodesic (with respect to $\Phi$) in the free homotopy class of $x$. 

\begin{remark}
In this paper we shall work primarily on the \emph{Fuchsian locus} of $\M$. By this we mean the representations which arise as lifts to $SL_2(\R)$ of discrete and faithful Teichm\"uller representations into $PSL_2(\R)$. Unless otherwise stated, all representations considered in $\M$ are assumed to lie in this Fuchsian locus. For details, see \cite{MR952283}.
\end{remark}
 Consider $\Phi\in \T$. Given any $x\in \pi_1$ consider the isometry corresponding to $\Phi(x)$. There are two possible choices for this isometry as an element of  $SL_2(\R)$. For a chosen representative $A\in SL_2(\R)$ for this isometry, let $\varepsilon_x$ be the sign of $\tr(A)$.    
 With the above identification, we have the following relation in $\T$ as a subset of $\M_W$ and $\M$ 
$$|\tr_x(\Phi)|=\varepsilon_x\tr_x=\varepsilon_x\tr(A)\,\,\,\text{and}\,\,\,\tr_x=2\varepsilon_x \cosh\frac{\ell_x}{2}$$
where $\varepsilon_x=\pm 1$ depends only on $x$.

Given any two geodesics $x$ and $y$ (on $\Sigma$ or on $\H$) and an intersection point $p$ between them, the sign of intersection at $p$, $\epsilon_p(x,y)$ is defined to be $1$ if $(x'(p), y'(p))$ agrees with the orientation at $p$ and $-1$ otherwise. Observe that $\epsilon_p(x,y)=-\epsilon_p(y,x)$ and $\epsilon_p(x,y)=-\epsilon_p(x^{-1},y)$

In \cite{goldman_invariant_1986}, Goldman showed that the space of all trace functions on $M_G$ admits a Poisson algebra structure from the Atiyah-Bott-Goldman symplectic form. We denote all these Poisson brackets by $\{ \,,\}$ where the space should be clear from the context. Goldman also gave an explicit description of these Poisson brackets for many classes of Lie groups. We recall the results relevant to our discussion below. 

\begin{theorem}\cite[Theorem 3.13 and 3.14]{goldman_invariant_1986}\label{thm:Goldman}
    Let $x$ and $y$ be two oriented closed curves on $\Sigma$ intersecting each other transversally in double points.
    For $\M$ we have  $$\{\tr_x,\tr_y\}=\frac 12\sum_{p\in x\cap y}\epsilon_p(x,y) (\tr_{x*_py}-\tr_{x*_py^{-1}}).$$
 Where $x*_py$ denotes the loop product of $x$ and $y$ at $p$ and $y^{-1}$ denotes the curve $y$ with opposite orientation.
\end{theorem}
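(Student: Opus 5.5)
The plan is to derive the formula as a special case of Goldman's general product formula for invariant functions. I will not redo the symplectic analysis; that formula is taken as the input. Recall its setup. Let $G$ be a Lie group with Lie algebra $\mathfrak g$ and a nondegenerate invariant symmetric bilinear form $\mathbb B$, which is used to define the Atiyah-Bott-Goldman form. For an invariant function $f:G\to\R$, define the \emph{variation function} $F:G\to\mathfrak g$ by
$$\mathbb B(F(A),X)=\frac{d}{dt}\Big|_{t=0} f(A\exp tX)\quad\text{for all } X\in\mathfrak g.$$
Goldman's formula then says that, for closed curves $x,y$ meeting transversally in double points,
$$\{f_x,g_y\}=\sum_{p\in x\cap y}\epsilon_p(x,y)\,\mathbb B\big(F(x_p),G(y_p)\big),$$
where $x_p,y_p\in\pi_1(\Sigma,p)$ are the loops based at $p$, evaluated under the representation. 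The proof is a reduction: identify $F$ for $f=\tr$ on $SL_2(\R)$, then rewrite $\mathbb B(F(A),F(B))$ through trace identities.

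\textbf{Step 1: the variation function.} Take $G=SL_2(\R)$, $\mathfrak g=\mathfrak{sl}_2(\R)$ and $\mathbb B(X,Y)=\tr(XY)$. For $f=\tr$ we have $\frac{d}{dt}\big|_{t=0}\tr(A\exp tX)=\tr(AX)$. Since $X$ is traceless, this equals $\tr\big((A-\tfrac12\tr(A)I)X\big)$, and $A-\tfrac12\tr(A)I$ lies in $\mathfrak{sl}_2$. By nondegeneracy of $\mathbb B$ on $\mathfrak{sl}_2$, this gives $F(A)=A-\tfrac12\tr(A)\,I$.

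\textbf{Step 2: the trace identity.} Expanding, and using $\tr I=2$,
$$\tr\big(F(A)F(B)\big)=\tr(AB)-\tfrac12\tr A\,\tr B.$$
The $SL_2$ Fricke identity $\tr(AB)+\tr(AB^{-1})=\tr A\,\tr B$ lets us eliminate the product of traces:
$$\tr(AB)-\tfrac12\tr A\,\tr B=\tfrac12\big(\tr(AB)-\tr(AB^{-1})\big).$$

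\textbf{Step 3: interpretation.} Under the representation, $x_py_p$ represents the loop product $x*_py$ and $x_py_p^{-1}$ represents $x*_py^{-1}$. Since traces are conjugation invariant, summing over $p$ with the signs $\epsilon_p(x,y)$ gives exactly $\frac12\sum_p\epsilon_p(x,y)(\tr_{x*_py}-\tr_{x*_py^{-1}})$.

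\textbf{Main obstacle.} The only delicate point is the normalization and sign conventions. The overall constant in front of the sum depends on the choice of $\mathbb B$ (the trace form, rather than a multiple such as the Killing form $4\tr$) and on the sign convention for $\epsilon_p$ relative to the orientation of $\H$. I would check these by matching against Goldman's conventions in \cite{goldman_invariant_1986}. As an independent test, I would compute one simple configuration, two simple closed curves meeting once, to confirm the factor $\tfrac12$ and the overall sign.
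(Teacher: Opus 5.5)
Your argument is correct. The paper gives no proof of its own and simply cites Goldman, and your route (specialize his product formula with the trace form $\mathbb{B}(X,Y)=\tr(XY)$, compute the variation $F(A)=A-\frac12\tr(A)I$, then use $\tr(AB)+\tr(AB^{-1})=\tr A\,\tr B$ to reach $\frac12\bigl(\tr(AB)-\tr(AB^{-1})\bigr)$) is exactly how Goldman obtains the $SL_2$ formula from his product formula. Your normalization caveat is also the right one: the factor $\frac12$ is tied to using the trace form for $\mathbb{B}$.
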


\begin{remark}
    The space $M_G$ is generally a singular variety and not a smooth manifold. However, it admits a dense subset which is a smooth manifold. To avoid technicalities, for our discussion, we pretend that $M_G$ is a smooth manifold. See \cite{MR827264} for details. 
\end{remark}


\section{Trace identities}\label{sec:trace id}

Let $A\in SL(2,\mathbb R)$ be a hyperbolic isometry with translation length $a>0$. Denote $C_u=\cosh\frac u2$ and $
S_u=\sinh\frac u2.$ Therefore  $ |{\tr}(A)|=2C_a$ and   $C_a= \varepsilon_A  \frac 12 \tr(A)$ where $\varepsilon_A\in \{1,-1\}$ determines the sign of the trace of $A$.  Define $ N_X:=\displaystyle\frac{\varepsilon_A A- C_aI}{S_a}$ or
equivalently,  $A:=\varepsilon_A (C_a I+S_a N_X).$ With this convention, we have the following lemma.
 \begin{lemma}\label{lem: rep hyp elmt}
 $A$ can be written as $ A=\varepsilon_A (C_a I+S_a N_X)$ where 
${\tr}(N_X)=0,$
 and $N_X^2=I.$ Moreover, $ A^{-1}=\varepsilon_A(C_a I-S_a N_X).$
 \end{lemma}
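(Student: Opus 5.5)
The representation $A=\varepsilon_A(C_aI+S_aN_X)$ holds by definition: $N_X$ is defined as $(\varepsilon_A A-C_aI)/S_a$, and $S_a>0$ because $a>0$. Solving this for $A$, and using $\varepsilon_A^2=1$, gives the stated form. So the real content of the lemma is the three claims $\tr(N_X)=0$, $N_X^2=I$, and the formula for $A^{-1}$. I will verify them in that order.

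For the trace, linearity gives $\tr(N_X)=(\varepsilon_A\tr(A)-2C_a)/S_a$. Since $\tr(A)=2\varepsilon_AC_a$, we have $\varepsilon_A\tr(A)=2C_a$, so the numerator vanishes.

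For $N_X^2=I$, set $B=\varepsilon_A A$. Then $B\in SL_2(\mathbb R)$, because $\det(\pm A)=\det A$ for $2\times 2$ matrices, and $\tr(B)=2C_a$. By Cayley--Hamilton, $B^2-2C_aB+I=0$. Now expand
\[
S_a^2N_X^2=(B-C_aI)^2=B^2-2C_aB+C_a^2I=(C_a^2-1)I=S_a^2I,
\]
using $\cosh^2-\sinh^2=1$. Dividing by $S_a^2\neq 0$ gives $N_X^2=I$.

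For the inverse, compute
\[
(C_aI+S_aN_X)(C_aI-S_aN_X)=C_a^2I-S_a^2N_X^2=(C_a^2-S_a^2)I=I,
\]
which is valid because the two factors commute. Multiplying by $\varepsilon_A^2=1$ then shows $A^{-1}=\varepsilon_A(C_aI-S_aN_X)$.

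No step here is a genuine obstacle. The only point needing care is the sign bookkeeping: one has to check that $\varepsilon_A A$ still has determinant $1$ and has trace exactly $2C_a$, so that the Cayley--Hamilton identity takes the clean form used above.
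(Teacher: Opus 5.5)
Your proof is correct and follows the paper's argument: you compute the trace by linearity and obtain $(\varepsilon_A A-C_aI)^2=S_a^2I$ from Cayley--Hamilton, exactly as the paper does. The only minor difference is the inverse: the paper reads off $A^{-1}=2\varepsilon_AC_aI-A$ directly from Cayley--Hamilton, whereas you check $(C_aI+S_aN_X)(C_aI-S_aN_X)=I$ using $N_X^2=I$, and both routes are equally valid.
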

\begin{proof}
By the Cayley-Hamilton theorem,
$A^2-{\tr}(A)A+I=0. $
Since $\varepsilon_A{\tr}(A)=2C_a,$ we get
$A^2-2\varepsilon_A C_a A+I=0.$
Then $$(\varepsilon_A A- C_aI)^2=
 A^2-2\varepsilon_AC_aA+C_a^2I = (C_a^2-1)I=S_a^2I.
$$
Therefore
$$N_X^2 =
\left(\frac{\varepsilon_A A- C_aI}{S_a}\right)^2=I.$$
Moreover
$${\tr}(N_X)=\frac{\varepsilon_A{\tr}( A)-2C_a}{S_a}=0.$$
Also $A^2-2\varepsilon_A C_aA+I=0$ implies $A^{-1}=2\varepsilon_AC_aI-A=2\varepsilon_AC_aI-(\varepsilon_A(C_AI+S_aN_X))=\varepsilon_A (C_aI-S_aN_X).$
\end{proof}
 Geometrically, $N_X$ records the oriented axis $X$ of the hyperbolic
element $A$. Replacing $A$ by $A^{-1}$ changes $N_X$ to $-N_X$, which corresponds to reversing the orientation of the axis.
\begin{remark}
    The decomposition mentioned in the above lemma is due to Goldman (see \cite[Section 4.2]{MR2497777}). 
\end{remark}
\begin{ex}\label{ex:stdhyp} Consider the standard diagonal hyperbolic translation
$ A=
\begin{pmatrix}
e^{a/2}&0\\
0&e^{-a/2}
\end{pmatrix}.
$ Then $$ A=C_a
\begin{pmatrix}
1&0\\
0&1
\end{pmatrix}
+
S_a
\begin{pmatrix}
1&0\\
0&-1
\end{pmatrix}.
$$
Thus, in this case $ N_X=
\begin{pmatrix}
1&0\\
0&-1
\end{pmatrix},
$ which corresponds to the oriented $Y$-axis in the upper half-plane
model.
\end{ex}
Let $x,y$ and $z$ be three closed geodesics on $\Sigma$. Let $A,B,C\in SL(2,\mathbb R)$ be three hyperbolic isometries which are lifts of the deck transformations corresponding to $x,y$ and $z$ respectively. Suppose the positive translation lengths of $A,B$ and $C$ are  $a,b$ and $c$ respectively. 

By Lemma \ref{lem: rep hyp elmt} we  write $$
A=\varepsilon_A(C_aI+S_aN_X), \qquad B=\varepsilon_B(C_bI+S_bN_Y), \qquad C=\varepsilon_C(C_cI+S_cN_Z).$$ 
Then we have the following lemma.
\begin{lemma}\label{lem:gen tr 
ident}
$$
\begin{aligned}
   \frac12{\tr}(AB)=&\ \varepsilon_A\varepsilon_B(C_aC_b+S_aS_b\frac12{\tr}(N_XN_Y)), \\
   \frac12{\tr}(BC)=&\ \varepsilon_B\varepsilon_C(C_bC_c+S_bS_c\frac12{\tr}(N_YN_Z)),\\
   \frac12{\tr}(AC)=&\ \varepsilon_A\varepsilon_C (C_aC_c+S_aS_c\frac12{\tr}(N_XN_Z)),\\
    \frac12{\tr}(ABC)=\varepsilon_A\varepsilon_B\varepsilon_C (C_aC_bC_c
+&
S_aS_bC_c\frac12\tr(N_XN_Y)
+
S_aC_bS_c\frac12\tr(N_XN_Z) \\
+&
C_aS_bS_c
\frac12\tr(N_YN_Z) 
+
S_aS_bS_c
\frac12\tr(N_XN_YN_Z)).
\end{aligned}
$$
\end{lemma}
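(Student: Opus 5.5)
The plan is to substitute the decompositions from Lemma \ref{lem: rep hyp elmt}, namely $A=\varepsilon_A(C_aI+S_aN_X)$, $B=\varepsilon_B(C_bI+S_bN_Y)$ and $C=\varepsilon_C(C_cI+S_cN_Z)$, directly into each product. Then expand by bilinearity (respectively trilinearity) of matrix multiplication, and finally apply linearity of the trace together with the facts $\tr(I)=2$ and $\tr(N_X)=\tr(N_Y)=\tr(N_Z)=0$. The signs $\varepsilon$ are scalars and factor out of every product at once.

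For the two-fold products, take $AB$ as the model case. It expands as
$$AB=\varepsilon_A\varepsilon_B\bigl(C_aC_bI+C_aS_bN_Y+S_aC_bN_X+S_aS_bN_XN_Y\bigr).$$
Taking the trace, the two middle terms vanish because $N_X$ and $N_Y$ are traceless. The first term contributes $2C_aC_b$, and the last contributes $S_aS_b\tr(N_XN_Y)$. Dividing by $2$ gives the first identity. The identities for $BC$ and $AC$ follow by exactly the same computation with the letters relabelled; no commutativity is needed, since each involves only a single product of two $N$'s.

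For the triple product $ABC$, expanding yields eight terms, indexed by the subsets of $\{X,Y,Z\}$ that contribute an $N$ factor (the complementary factors contribute $C_\ast I$). The term with no $N$ gives $2C_aC_bC_c$. The three terms with a single $N$ are traceless and drop out. The three terms with two $N$'s give $S_aS_bC_c\tr(N_XN_Y)$, $S_aC_bS_c\tr(N_XN_Z)$ and $C_aS_bS_c\tr(N_YN_Z)$. Here the order of the factors is preserved by the expansion: $N_X$ always sits left of $N_Z$ because the factor $B$ contributes only the scalar $C_bI$ in the middle. The term with three $N$'s gives $S_aS_bS_c\tr(N_XN_YN_Z)$. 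Halving produces the stated formula.

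There is no real obstacle here; the only point requiring care is bookkeeping the order of noncommuting factors in the triple expansion. Because identity factors commute with everything, each surviving term automatically retains the cyclic order $X,Y,Z$, which matches the statement.
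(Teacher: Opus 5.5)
Your proposal is correct and follows the same route as the paper: substitute the decompositions $A=\varepsilon_A(C_aI+S_aN_X)$, $B=\varepsilon_B(C_bI+S_bN_Y)$, $C=\varepsilon_C(C_cI+S_cN_Z)$, expand the products, and use $\tr(I)=2$ together with $\tr(N_X)=\tr(N_Y)=\tr(N_Z)=0$. The paper writes out only the $AB$ case and calls the rest analogous, so your explicit eight-term expansion of $ABC$, with its check that the factor order is preserved, is simply a more detailed version of the same argument.
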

\begin{proof}
With the above notations,    $$AB= \varepsilon_A\varepsilon_B(C_aC_bI+C_aS_b N_Y+S_aC_bN_X+S_aS_bN_XN_Y).$$
    Since $\tr (N_X)$ and $\tr(N_Y)$ both are zero,
    $$ \frac12{\tr}(AB)=\varepsilon_A\varepsilon_B(C_aC_b+S_aS_b\frac12{\tr}(N_XN_Y)).$$   The remaining identities follow by analogous computations.
\end{proof}
\subsection{Trace of product of two isometries}

Let $A,B\in SL_2(\mathbb R)$ be two hyperbolic isometries with translation lengths $a>0$ and $b>0$ respectively such that their respective axes $T_A$ and $T_B$ intersect at $p$ at an angle $\theta_p$, where the angle is measured anticlockwise from $T_A$ to $T_B$. We have the following identity.

\begin{lemma}\label{lem:double}
$$
\begin{aligned}
\frac12{\tr}(AB)=
\varepsilon_A\varepsilon_B(C_aC_b+\epsilon_p(T_A,T_B)S_aS_b\cos\theta_p)\end{aligned}.
$$
 
\end{lemma}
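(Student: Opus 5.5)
By Lemma~\ref{lem:gen tr
ident}, $\frac12\tr(AB)=\varepsilon_A\varepsilon_B\bigl(C_aC_b+S_aS_b\frac12\tr(N_XN_Y)\bigr)$. So the statement reduces to the single identity
\[
\tfrac12\tr(N_XN_Y)=\epsilon_p(T_A,T_B)\cos\theta_p .
\]
Everything below is devoted to this identity, which depends only on the oriented axes.

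\emph{Step 1: naturality under conjugation.} For $g\in SL_2(\R)$, the matrix $gAg^{-1}$ has the same trace, so the same $\varepsilon_A$ and $a$. Directly from the definition of $N_X$, its normalized matrix is $gN_Xg^{-1}$. Hence $\tr(N_XN_Y)$ is unchanged under simultaneous conjugation of $A$ and $B$. Since $g$ is an orientation-preserving isometry, it also preserves $\theta_p$ and $\epsilon_p(T_A,T_B)$. We may therefore conjugate so that $p=i$ and $T_A$ is the imaginary axis.

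\emph{Step 2: a standard form for $N_X$.} After Step 1, $\varepsilon_AA$ is diagonal with positive trace, namely $\mathrm{diag}(e^{\pm a/2},e^{\mp a/2})$. By Example~\ref{ex:stdhyp}, $N_X=\pm\,\mathrm{diag}(1,-1)$, the sign recording whether $A$ translates up or down the axis. If the sign is $-$, I replace $A$ by $A^{-1}$. By Lemma~\ref{lem: rep hyp elmt} this flips $N_X$, and it also flips $\epsilon_p$, so both sides of the reduced identity change sign. I may thus assume $N_X=\mathrm{diag}(1,-1)$ with $T_A$ oriented upward.

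\emph{Step 3: express $N_Y$ by a rotation.} Let $R_\varphi=\begin{pmatrix}\cos\frac\varphi2&-\sin\frac\varphi2\\ \sin\frac\varphi2&\cos\frac\varphi2\end{pmatrix}$. This is the elliptic element fixing $i$ with derivative $e^{i\varphi}$ there, i.e.\ the rotation about $i$ by angle $\varphi$. Since $T_B$ passes through $i$, the oriented axis $T_B$ is the image of the upward imaginary axis under $R_\varphi$ for a unique $\varphi$ mod $2\pi$. Then $R_\varphi^{-1}BR_\varphi$ translates upward along the imaginary axis, so by Step 1 and Example~\ref{ex:stdhyp}, $N_Y=R_\varphi N_XR_\varphi^{-1}$. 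A direct computation gives $N_XR_\varphi N_X=R_\varphi^{-1}$, hence
\[
\tr(N_XN_Y)=\tr(R_\varphi^{-2})=2\cos\varphi .
\]

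\emph{Step 4: match $\varphi$ with the geometric data.} This is the main obstacle: the sign conventions must be reconciled. Here $\varphi$ is the anticlockwise angle from the tangent of $T_A$ to that of $T_B$ at $p$. If $\varphi\in(0,\pi)$, the frame $(T_A'(p),T_B'(p))$ is positively oriented, so $\epsilon_p=1$ and $\theta_p=\varphi$. If $\varphi\in(\pi,2\pi)$, then $\epsilon_p=-1$. In that case the intersection angle in the paper's convention, taken in $(0,\pi)$ between the oriented lines, is $\theta_p=\varphi-\pi$, so $\cos\varphi=-\cos\theta_p$. In both cases $\cos\varphi=\epsilon_p(T_A,T_B)\cos\theta_p$.

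I would carefully check that this reading of ``measured anticlockwise'' is the one that makes the $\epsilon_p$ factor appear. I would also verify that $R_\varphi$ really rotates anticlockwise by $\varphi$ (derivative $1/(\,\cdot\,)^2$ of the Möbius map at $i$ equals $e^{i\varphi}$), since a sign slip there would flip the final sign. Substituting $\frac12\tr(N_XN_Y)=\epsilon_p\cos\theta_p$ into Lemma~\ref{lem:gen tr
ident} then gives the claim.
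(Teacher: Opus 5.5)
Your proposal is correct and is essentially the paper's proof: reduce to $\frac12\tr(N_XN_Y)=\epsilon_p(T_A,T_B)\cos\theta_p$, normalize so that $p=i$ and $N_X=\mathrm{diag}(1,-1)$, obtain $N_Y$ by conjugating $N_X$ with a rotation about $i$, and read off the trace. The differences are minor: the paper handles the reversed orientation of $T_B$ by $N_Y\mapsto -N_Y$ rather than through your case $\varphi\in(\pi,2\pi)$, and you also justify the conjugation-naturality of $N_X$, which the paper uses without comment.

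One slip. At $i$ your $R_\varphi$ has derivative $1/(\cos\frac\varphi2+i\sin\frac\varphi2)^2=e^{-i\varphi}$, so it rotates clockwise; the paper's $K_{\theta_p}$ is your $R_{-\theta_p}$. This is harmless, and contrary to your worry it cannot flip any sign, because $\tr(R_\varphi^{-2})=2\cos\varphi$ is even in $\varphi$.
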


\begin{proof}
 Write
$A=\varepsilon_A (C_a I+S_a N_X)$ and 
$B=\varepsilon_B (C_b I+S_b N_Y)$. Then by Lemma \ref{lem:gen tr ident} $$\frac12{\tr}(AB)=
\varepsilon_A\varepsilon_B (C_aC_b+S_aS_b
\frac12{\tr}(N_XN_Y)). $$
We have to show that $$
\frac12{\tr}(N_XN_Y)=\epsilon_p(T_A,T_B)\cos\theta_p.
$$
Because trace is invariant under conjugation, without  loss of generality, we may consider $p$
to be $i\in\mathbb H$, and we may assume that the axis of $A$ is the $Y$-axis with positive orientation. Then by Example \ref{ex:stdhyp}, 
$$ A=
\begin{pmatrix}
e^{a/2} & 0\\
0 & e^{-a/2}
\end{pmatrix} \quad 
\quad 
N_X=
\begin{pmatrix}
1&0\\
0&-1
\end{pmatrix}.
$$
Now suppose the axis of $B$ is the geodesic passing through $i$ making oriented angle $\theta_p$ with
$Y$-axis. Then it is obtained by rotating the axis of $A$ anticlockwise through an angle $\theta_p$ at $i$. The rotation matrix  by angle $\theta_p$ is
$$
K_{\theta_p}=
\begin{pmatrix}
\cos(\theta_p/2)& \sin(\theta_p/2)\\
-\sin(\theta_p/2)&\cos(\theta_p/2)
\end{pmatrix}.
$$
The directions assigned to the axes $T_A$ and $T_B$ are shown on the left-hand side of  Figure \ref{fig:2 ints geod}.
\begin{figure}[h]
    \centering
    \includegraphics[width=7cm]{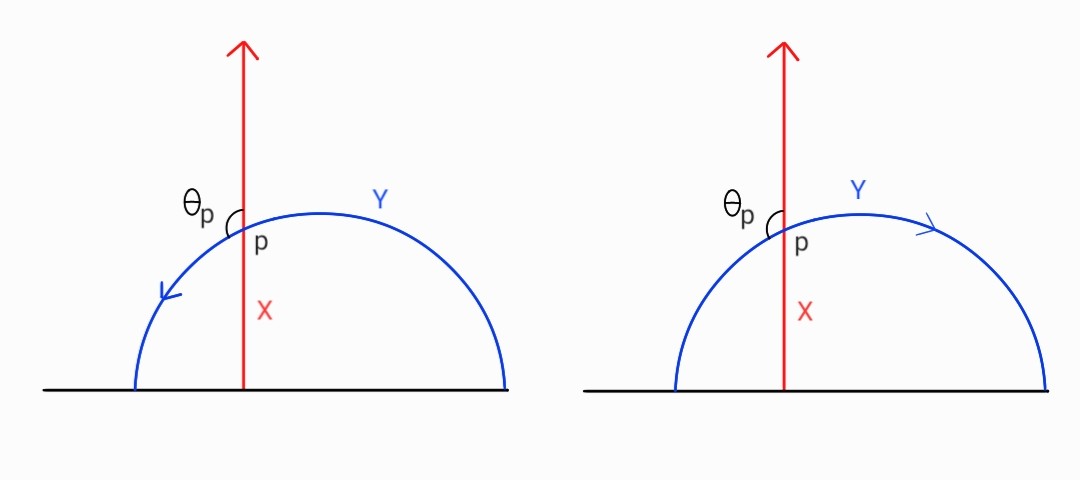}
   \caption{$T_A$ intersects $T_B$}
   \label{fig:2 ints geod}
\end{figure}
\noindent
Hence,
$
N_Y=K_{\theta_p} N_X K_{\theta_p}^{-1}.
$
A direct computation gives
$$
N_Y=
\begin{pmatrix}
\cos\theta_p & -\sin\theta_p \\
-\sin\theta_p &-\cos\theta_p
\end{pmatrix}.
$$
Therefore
$$
N_XN_Y
=
\begin{pmatrix}
1&0\\
0&-1
\end{pmatrix}
\begin{pmatrix}
\cos\theta_p&-\sin\theta_p\\
-\sin\theta_p&-\cos\theta_p
\end{pmatrix}
=
\begin{pmatrix}
\cos\theta_p&-\sin\theta_p\\
\sin\theta_p&\cos\theta_p
\end{pmatrix}
$$
and  
$
{\tr}(N_XN_Y)=2\cos\theta_p.
$
If the direction of $T_B$ is chosen as in the right-hand side of Figure \ref{fig:2 ints geod}
then we replace  $N_Y$ to $-N_Y$ and a similar computation shows ${\tr}(N_XN_Y)=-2\cos\theta_p.$ The identity follows. 
\end{proof}

\begin{corollary}\label{cor:double}
If $A,B\in SL_2(\mathbb R)$ be two hyperbolic isometries with translation lengths $a>0$ and $b>0$ respectively such that $T_A$ and $T_B$ intersects at $p$ at an angle $\theta_p$, measured anticlockwise from $T_A$ to $T_B$ then   $$\epsilon_p(T_A,T_B)(\tr(AB)-\tr(AB^{-1}))=4\varepsilon_A\varepsilon_BS_aS_b\cdot \cos\theta_p.$$
\end{corollary}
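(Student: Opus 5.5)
The plan is to apply Lemma \ref{lem:double} twice: once to the pair $(A,B)$ and once to the pair $(A,B^{-1})$, and then subtract. The only things to track are how the sign $\varepsilon_B$, the axis $T_B$, and the intersection sign change when $B$ is replaced by $B^{-1}$.

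First, I would record these changes. By Lemma \ref{lem: rep hyp elmt}, $B^{-1}=\varepsilon_B(C_bI-S_bN_Y)$. So $B^{-1}$ is hyperbolic with the same translation length $b$. Its trace sign is $\varepsilon_{B^{-1}}=\varepsilon_B$, since $\tr(B^{-1})=\tr(B)$. Its oriented axis is $T_B$ with the opposite orientation, because $N_Y$ is replaced by $-N_Y$.

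Second, I would apply Lemma \ref{lem:gen tr ident} to both products. This gives
\[
\tfrac12\tr(AB)=\varepsilon_A\varepsilon_B\bigl(C_aC_b+S_aS_b\tfrac12\tr(N_XN_Y)\bigr),
\qquad
\tfrac12\tr(AB^{-1})=\varepsilon_A\varepsilon_B\bigl(C_aC_b-S_aS_b\tfrac12\tr(N_XN_Y)\bigr).
\]
Subtracting, the $C_aC_b$ terms cancel and we get $\tr(AB)-\tr(AB^{-1})=2\varepsilon_A\varepsilon_B S_aS_b\tr(N_XN_Y)$.

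Third, I would evaluate $\tr(N_XN_Y)$. The proof of Lemma \ref{lem:double} shows that $\tr(N_XN_Y)=2\epsilon_p(T_A,T_B)\cos\theta_p$. Substituting this gives
\[
\tr(AB)-\tr(AB^{-1})=4\varepsilon_A\varepsilon_B\,\epsilon_p(T_A,T_B)\,S_aS_b\cos\theta_p.
\]
Multiplying both sides by $\epsilon_p(T_A,T_B)$ and using $\epsilon_p^2=1$ yields the stated identity.

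The one place needing care is consistency of conventions. The angle $\theta_p$ and the sign must be used exactly as in Lemma \ref{lem:double}, where $\frac12\tr(N_XN_Y)=\epsilon_p\cos\theta_p$. Computing the difference at the level of $N_Y$, as above, avoids having to decide how $\theta_p$ is reinterpreted for the reversed axis of $B^{-1}$. If one instead applies Lemma \ref{lem:double} directly to $AB^{-1}$, one must use $\epsilon_p(T_A,T_{B^{-1}})=-\epsilon_p(T_A,T_B)$ together with the same $\cos\theta_p$. This gives the same result.
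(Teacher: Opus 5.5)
Your proposal is correct and follows essentially the same route as the paper, which notes $\epsilon_p(T_A,T_B^{-1})=-\epsilon_p(T_A,T_B)$ and appeals to the proof of Lemma \ref{lem:double}. There, reversing the orientation of $T_B$ amounts to replacing $N_Y$ by $-N_Y$, which is exactly the subtraction you carry out explicitly via $B^{-1}=\varepsilon_B(C_bI-S_bN_Y)$ and $\frac12\tr(N_XN_Y)=\epsilon_p(T_A,T_B)\cos\theta_p$.
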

\begin{proof}
    Recall that, $\epsilon_p(T_A,T_B^{-1})=-\epsilon_p(T_A,T_B)$. The conclusion follows from the proof of Lemma \ref{lem:double}.
\end{proof}
\begin{remark}\label{rem:sign}
Observe that $\varepsilon_{AB}=\varepsilon_A\varepsilon_B$. Indeed  by Lemma \ref{lem: rep hyp elmt}, $AB$ can be written as 
\[
AB=\varepsilon_{AB}(C_\lambda I+S_\lambda N_{AB})
\]
where $\lambda$ be translation length of $AB$, so that $\tr(AB)=2\varepsilon_{AB}C_\lambda.$  By Lemma \ref{lem:gen tr 
ident}, 
\[
\tr(AB)=2\varepsilon_A\varepsilon_B(C_aC_b+S_aS_b\frac 12{\tr}(N_XN_Y)).
\]
Therefore, we obtain
$\varepsilon_{AB}C_\lambda=\varepsilon_A\varepsilon_B(C_aC_b+S_aS_b\frac 12{\tr}(N_XN_Y)).$ Since $-1\leq \frac 12\tr(N_XN_Y)\leq 1$, by Lemma \ref{lem:double} , together with the facts $C_\lambda>0$ and $C_aC_b-S_aS_b>0$, the claim follows.
\end{remark}
\subsection{Trace of product of three isometries}  
Let $A,B, C\in SL_2(\mathbb R)$ be three hyperbolic isometries with respective translation lengths
$a, b, c>0$ and axes $T_A, T_B, $ and $T_C$.
\subsubsection{Case I: $T_A$ intersects $T_C$ }\label{subs:case1}
Suppose, the axes $T_A$ and $T_B$ intersect at $p$ at an angle $\theta_p$, measured anticlockwise from $T_A$ to $T_B$; the axes $T_A$ and $T_C$ intersect at $q$ at an angle $\theta_q, $ and measured anticlockwise from $T_A$ to $T_C$. Let $\ell_{pq}$ be the signed distance from $p$ to $q$ along $T_A$. Throughout this computation, we assume that $q$ lies in the positive direction from $p$ along $T_A$ (see Remark \ref{rem:intersection position} for the explanation). See the left-most picture of Figure \ref{fig: 3 int geod x_z}.
\begin{lemma}\label{lem:triplec1}
With the above description of $A,B,C\in SL_2(\mathbb{R})$,
$$\begin{aligned}
   \tr(N_XN_Y)=& \ 2\epsilon_p(T_A,T_B)\cos\theta_p,\\
    \tr(N_XN_Z)=&\ 2\epsilon_q(T_A,T_C)\cos\theta_q,\\
    \tr(N_YN_Z)=& \ 2\epsilon_p(T_A,T_B)\epsilon_q(T_A,T_C)(\cos\theta_p\cos\theta_q
+
\cosh\ell_{pq}\,\sin\theta_p\sin\theta_q),\\
\tr(N_XN_YN_Z)=&-2\epsilon_p(T_A,T_B)\epsilon_q(T_A,T_C)\sinh\ell_{pq}\,\sin\theta_p\sin\theta_q
\end{aligned} 
$$
\begin{proof}
     We use the same notation as in Lemma \ref{lem:double} and Lemma \ref{lem: rep hyp elmt}. 
    Using Lemma \ref{lem:double}, we get the first identity. The hyperbolic translation along $Y$-axis by signed
distance $\ell_{pq}$ is
$$
D_{\ell_{pq}}=
\begin{pmatrix}
e^{\ell_{pq}/2}&0\\
0&e^{-\ell_{pq}/2}
\end{pmatrix}.
$$
Therefore  $e^{\ell_{pq}} i$ is a lift of $q$ which 
is at signed distance $\ell_{pq}$ from $i$ along the axis of $A$.
\begin{figure}[h]
  \centering
    \includegraphics[ width=2.8 cm, angle=90]{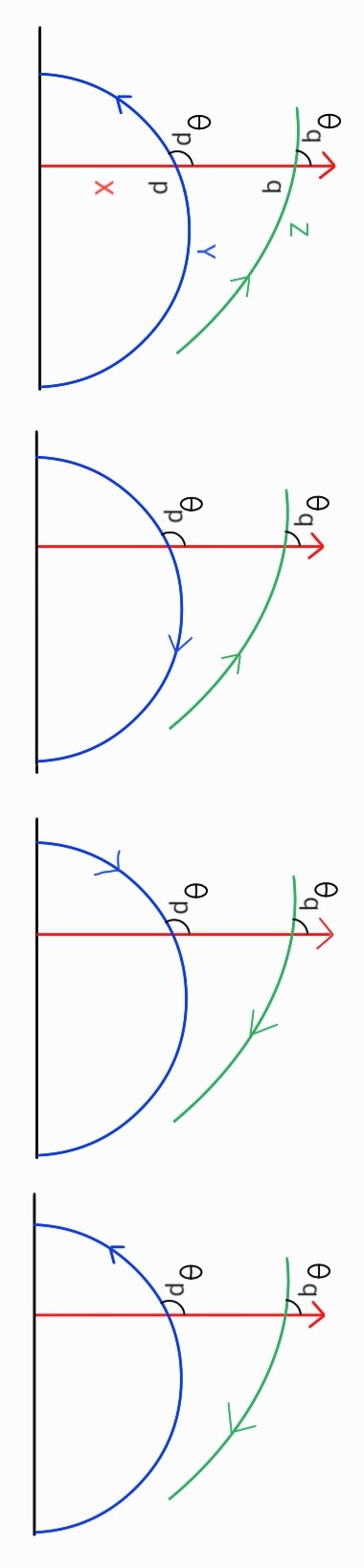}
      \caption{$T_A$ intersects $T_B$; $T_A$ intersects $T_C$}
     \label{fig: 3 int geod x_z}
\end{figure}
The axis of  $C$  passes through
$D_{\ell_{pq}}(i).$

Now we consider the case $\epsilon_p(T_A,T_B)=1$ and $\epsilon_p(T_A,T_C)=1$. The situation is depicted at the first figure of Figure \ref{fig: 3 int geod x_z}. So $C$ is obtained by conjugating the isometry with translation length $c$ whose axis  $i$ making angle $\theta_q$ with $Y$-axis by $D_{\ell_{pq}}$. 
Therefore, we get
$
N_Z=D_{\ell_{pq}} N_{\theta_q} D_{\ell_{pq}}^{-1},
$
where $$
N_{\theta_q}=
\begin{pmatrix}
\cos\theta_q&-\sin\theta_q\\
-\sin\theta_q&-\cos\theta_q
\end{pmatrix}.
$$
Thus
$$
\begin{aligned}
N_Z
&=
\begin{pmatrix}
e^{\ell_{pq}/2}&0\\
0&e^{-\ell_{pq}/2}
\end{pmatrix}
\begin{pmatrix}
\cos{\theta_q}&-\sin{\theta_q}\\
-\sin{\theta_q}&-\cos{\theta_q}
\end{pmatrix}
\begin{pmatrix}
e^{-\ell_{pq}/2}&0\\
0&e^{\ell_{pq}/2}
\end{pmatrix} \\
&=
\begin{pmatrix}
\cos{\theta_q}&-e^{\ell_{pq}}\sin{\theta_q}\\
-e^{-\ell_{pq}}\sin{\theta_q}&-\cos{\theta_q}
\end{pmatrix}.
\end{aligned}
$$
Then a direct multiplication gives
\begin{align*}
   & {\tr}(N_XN_Z)=2\cos\theta_q,\
\operatorname{tr}(N_YN_Z)
=
2(\cos\theta_p\cos{\theta_q}
+
\cosh\ell_{pq}\,\sin\theta_p\sin{\theta_q}),\\
&\operatorname{tr}(N_XN_YN_Z)
=-
2\sinh\ell_{pq}\,\sin\theta_p\sin{\theta_q}.
\end{align*}
All other cases are shown in Figure \ref{fig: 3 int geod x_z} and are obtained by reversing the directions of $T_A$, $T_B$ and $T_C$. 
For example, if the direction of $T_A$ and $T_B$ are same as in the first picture of Figure \ref{fig: 3 int geod x_z} and $T_C$ is in opposite direction
then, we obtain the fourth picture and we replace $N_Z$ by $-N_Z$ and a similar computation shows
\begin{align*}
    {\tr}(N_XN_Z)=-2\cos\theta_q,\
&\operatorname{tr}(N_YN_Z)
=-
2(\cos\theta_p\cos{\theta_q}
+
\cosh\ell_{pq}\,\sin\theta_p\sin{\theta_q}),\\ \text{and }
\operatorname{tr}(N_XN_YN_Z)
&=
2\sinh\ell_{pq}\,\sin\theta_p\sin{\theta_q}.
\end{align*}
Considering all possible cases, we get the identities.
\end{proof}
\begin{corollary}\label{cor: sum ABC} Using the notation in Lemma \ref{lem:triplec1}, we have
  \begin{align*}
   \frac12{\tr}(ABC)
 =&
\ \varepsilon_A\varepsilon_B\varepsilon_C (C_aC_bC_c
+
\epsilon_p(T_A,T_B) S_aS_bC_c\cos\theta_p
+
\epsilon_q(T_A,T_C) S_aC_bS_c\cos\theta_q \\
&+
 C_aS_bS_c\epsilon_p(T_A,T_B)\epsilon_q(T_A,T_C)
\left(
\cos\theta_p\cos\theta_q
+
\cosh\ell_{pq}\,\sin\theta_p\sin\theta_q
\right) \\
&-
\epsilon_p(T_A,T_B)\epsilon_q(T_A,T_C)S_aS_bS_c
\sinh\ell_{pq}\,\sin\theta_p\sin\theta_q). 
\end{align*}   
  \end{corollary}
\end{lemma}
\begin{proof}

By Lemma \ref{lem:gen tr 
ident},
\begin{align*}
    \frac12{\tr}(ABC)
 =&
\ \varepsilon_A\varepsilon_B\varepsilon_C (C_aC_bC_c
+
 S_aS_bC_c\frac12\tr(N_XN_Y)
+
S_aC_bS_c\frac12\tr(N_XN_Z) \\
&+
 C_aS_bS_c
\frac12\tr(N_YN_Z) 
+
S_aS_bS_c
\frac12\tr(N_XN_YN_Z)).
\end{align*}
We substitute all values using Lemma \ref{lem:triplec1} to get the desired identity.
\end{proof}
\begin{lemma}\label{lem: sum 1}
 With the same assumption as above, we have $$
\begin{aligned}
&\epsilon_p(T_A,T_B)\epsilon_q(T_A,T_C)\left({\tr}(
CAB-CB^{-1}A^{-1}-CAB^{-1}+CBA^{-1}
)\right) \\
&=
8
\varepsilon_A\varepsilon_B\varepsilon_C \Big(
 C_aS_bS_c
 \cos\theta_p\cos\theta_q
+S_bS_c\cosh\left(\ell_{pq}-\frac{a}{2}\right)
\sin\theta_p\sin\theta_q
 \Big).
\end{aligned}
$$
\end{lemma}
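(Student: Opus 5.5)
The plan is to expand each of the four traces directly with the decomposition of Lemma~\ref{lem: rep hyp elmt}, exactly as in Lemma~\ref{lem:gen tr ident}, and then substitute the values from Lemma~\ref{lem:triplec1}. Since $A^{-1}=\varepsilon_A(C_aI-S_aN_X)$ and $B^{-1}=\varepsilon_B(C_bI-S_bN_Y)$, every one of the four words $CAB$, $CB^{-1}A^{-1}$, $CAB^{-1}$, $CBA^{-1}$ has the common prefactor $\varepsilon_A\varepsilon_B\varepsilon_C$. Each word has the shape $C(C_aI+sS_aN_X)(C_bI+tS_bN_Y)$ or $C(C_bI+tS_bN_Y)(C_aI+sS_aN_X)$ with $s,t\in\{\pm1\}$. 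The sign pattern is:
\begin{itemize}
\item $CAB$: order $XY$, $(s,t)=(+,+)$, overall coefficient $+1$;
\item $CB^{-1}A^{-1}$: order $YX$, $(s,t)=(-,-)$, overall coefficient $-1$;
\item $CAB^{-1}$: order $XY$, $(s,t)=(+,-)$, overall coefficient $-1$;
\item $CBA^{-1}$: order $YX$, $(s,t)=(-,+)$, overall coefficient $+1$.
\end{itemize}

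We expand each product into the eight monomials in $I,N_X,N_Y,N_Z$ and take traces. We use $\tr N_X=\tr N_Y=\tr N_Z=0$ and $\tr(N_XN_Y)=\tr(N_YN_X)$. We also need the identity $\tr(N_ZN_YN_X)=-\tr(N_XN_YN_Z)$ for traceless $2\times2$ matrices. This follows from the polarized Cayley--Hamilton identity $\tr(XYZ)+\tr(XZY)=\tr X\tr(YZ)+\tr Y\tr(XZ)+\tr Z\tr(XY)-\tr X\tr Y\tr Z$, whose right side vanishes when all three are traceless, combined with cyclicity.

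Collecting coefficients term by term:
\begin{itemize}
\item The $C_aC_bC_c$ terms carry $1-1-1+1=0$.
\item The $S_aC_bS_c\tr(N_XN_Z)$ terms carry $s$-weighted coefficients $1+1-1-1=0$.
\item The $S_aS_bC_c\tr(N_XN_Y)$ terms carry $st$-weighted coefficients $1-1+1-1=0$.
\item The $C_aS_bS_c\tr(N_YN_Z)$ terms carry $1+1+1+1=4$.
\item The triple terms: after rewriting each $\tr(N_ZN_YN_X)$ as $-\tr(N_XN_YN_Z)$, all four contributions become $+S_aS_bS_c\tr(N_XN_YN_Z)$, giving a total of $4$.
\end{itemize}
Hence
\begin{equation*}
\tr(CAB-CB^{-1}A^{-1}-CAB^{-1}+CBA^{-1})=4\varepsilon_A\varepsilon_B\varepsilon_C\bigl(C_aS_bS_c\tr(N_YN_Z)+S_aS_bS_c\tr(N_XN_YN_Z)\bigr).
\end{equation*}

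Next we substitute the third and fourth identities of Lemma~\ref{lem:triplec1}. Each of those carries the factor $\epsilon_p(T_A,T_B)\epsilon_q(T_A,T_C)$, which cancels against the prefactor in the statement because its square is $1$. This yields
\begin{equation*}
8\varepsilon_A\varepsilon_B\varepsilon_C\bigl(C_aS_bS_c\cos\theta_p\cos\theta_q+S_bS_c(C_a\cosh\ell_{pq}-S_a\sinh\ell_{pq})\sin\theta_p\sin\theta_q\bigr).
\end{equation*}
The subtraction formula gives $C_a\cosh\ell_{pq}-S_a\sinh\ell_{pq}=\cosh(\ell_{pq}-\tfrac a2)$, which produces exactly the claimed right-hand side.

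The main obstacle I expect is the sign bookkeeping in the coefficient collection, especially for the triple products, where both the reversed order and the inverse signs interact. I would verify those by tabulating the four $(s,t,\text{order})$ cases explicitly. I would also confirm the reversal identity $\tr(N_ZN_YN_X)=-\tr(N_XN_YN_Z)$ on the explicit matrices from the proof of Lemma~\ref{lem:triplec1} as a sanity check.
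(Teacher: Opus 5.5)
Your proof is correct and is essentially the paper's argument. You expand each word with the decomposition $\varepsilon(C_uI\pm S_uN)$, observe that the sign pattern kills the $C_aC_bC_c$, $\tr(N_XN_Y)$ and $\tr(N_XN_Z)$ terms and quadruples the $\tr(N_YN_Z)$ and $\tr(N_XN_YN_Z)$ terms, substitute Lemma~\ref{lem:triplec1}, and finish with $C_a\cosh\ell_{pq}-S_a\sinh\ell_{pq}=\cosh(\ell_{pq}-\frac a2)$.

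The only difference is cosmetic. The paper first uses cyclicity and $\tr(M^{-1})=\tr(M)$ in $SL_2(\R)$ to rewrite the four words as $ABC$, $ABC^{-1}$, $AB^{-1}C$, $AB^{-1}C^{-1}$, and reads each off Corollary~\ref{cor: sum ABC} by flipping the signs of $N_Y,N_Z$. You instead keep the original orders and use $\tr(N_ZN_YN_X)=-\tr(N_XN_YN_Z)$. This is the same fact at the level of the $N$'s, since $N_ZN_YN_X=(N_XN_YN_Z)^{-1}$ and $\det(N_XN_YN_Z)=-1$.
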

\begin{proof}
    Recall that trace is invariant under cyclic permutation and is invariant under inversion in $SL_2(\mathbb R)$. Therefore the above expression is same as $$\epsilon_p(T_A,T_B)\epsilon_q(T_A,T_C)\left({\tr}(
ABC-ABC^{-1}-AB^{-1}C+AB^{-1}C^{-1}
)\right).$$ 
The identities for $\frac{1}{2}\tr(ABC^{-1})$, $\frac{1}{2}\tr(AB^{-1}C)$ and $\frac{1}{2}\tr(AB^{-1}C^{-1})$ can be obtained from Corollary  \ref{lem: sum 1} in two equivalent ways. On the one hand, they follow by replacing $N_Z$ by $-N_Z$ and $N_Y$ by $-N_Y$, and both $N_Y$ and $N_Z$ with their negatives, respectively. On the other hand, they are obtained by replacing $C$ by $C^{-1}$, $B$ by $B^{-1}$, and both $B$ and $C$ with their inverses, respectively. In the latter approach, we use the relations $\epsilon_p(T_A,T_{B^{-1}})=-\epsilon_p(T_A,T_B)$ and $\epsilon_p(T_A,T_{C^{-1}})=-\epsilon_p(T_A,T_C)$. Therefore,
\begin{align*}
   \frac12{\tr}(ABC^{-1})
 &=
\varepsilon_A\varepsilon_B\varepsilon_C( C_aC_bC_c
+
\epsilon_p(T_A,T_B) S_aS_bC_c\cos\theta_p
-
\epsilon_q(T_A,T_C) S_aC_bS_c\cos\theta_q \\
&-
 C_aS_bS_c\epsilon_p(T_A,T_B)\epsilon_q(T_A,T_C)
\left(
\cos\theta_p\cos\theta_q
+
\cosh\ell_{pq}\,\sin\theta_p\sin\theta_q
\right) \\
&  +
\epsilon_p(T_A,T_B)\epsilon_q(T_A,T_C)S_aS_bS_c
\sinh\ell_{pq}\,\sin\theta_p\sin\theta_q), \\
   \frac12{\tr}(AB^{-1}C)
 &=
\varepsilon_A\varepsilon_B\varepsilon_C (C_aC_bC_c
-
\epsilon_p(T_A,T_B) S_aS_bC_c\cos\theta_p
+
\epsilon_q(T_A,T_C) S_aC_bS_c\cos\theta_q \\
& \quad \quad-
C_aS_bS_c\epsilon_p(T_A,T_B)\epsilon_q(T_A,T_C)
\left(
\cos\theta_p\cos\theta_q
+
\cosh\ell_{pq}\,\sin\theta_p\sin\theta_q
\right) \\
& \quad \quad +
\epsilon_p(T_A,T_B)\epsilon_q(T_A,T_C)S_aS_bS_c
\sinh\ell_{pq}\,\sin\theta_p\sin\theta_q), \\
   \frac12{\tr}(AB^{-1}C^{-1})
 & =
\varepsilon_A\varepsilon_B\varepsilon_C (C_aC_bC_c
-
\epsilon_p(T_A,T_B) S_aS_bC_c\cos\theta_p
-
\epsilon_q(T_A,T_C) S_aC_bS_c\cos\theta_q \\
&  \quad \quad+
C_aS_bS_c\epsilon_p(T_A,T_B)\epsilon_q(T_A,T_C)
\left(
\cos\theta_p\cos\theta_q
+
\cosh\ell_{pq}\,\sin\theta_p\sin\theta_q
\right) \\
& \quad \quad -
\epsilon_p(T_A,T_B)\epsilon_q(T_A,T_C)S_aS_bS_c
\sinh\ell_{pq}\,\sin\theta_p\sin\theta_q).
\end{align*} 
Consequently
\begin{align*}
&\epsilon_p(T_A,T_B)\epsilon_q(T_A,T_C)({\tr}(
ABC-ABC^{-1}-AB^{-1}C+AB^{-1}C^{-1}
))\\
&=
8S_bS_c
\varepsilon_A\varepsilon_B\varepsilon_C\Big(
C_a
\left(
\cos\theta_p\cos\theta_q
+
\cosh\ell_{pq}\,\sin\theta_p\sin\theta_q
\right)
-
S_a\sinh\ell_{pq}\,\sin\theta_p\sin\theta_q
\Big)\\
&=8
 \varepsilon_A\varepsilon_B\varepsilon_C \Big(
 C_aS_bS_c
 \cos\theta_p\cos\theta_q+S_bS_c\left(C_a\cosh\ell_{pq}-S_a\sinh\ell_{pq}\right)\sin\theta_p\cos\theta_q\Big)
\\&=
8
 \varepsilon_A\varepsilon_B\varepsilon_C\Big(
 C_aS_bS_c
 \cos\theta_p\cos\theta_q
+S_bS_c\cosh\left(\ell_{pq}-\frac{a}{2}\right)
\sin\theta_p\sin\theta_q
 \Big). 
\end{align*}
This completes the proof.
\end{proof}
\begin{remark}\label{rem: sum 1}
    Recall that $\epsilon_q(T_A,T_C)=-\epsilon_q(T_C,T_A)$. Therefore, the identity of Corollary \ref{lem: sum 1} can be written  as 
\begin{align*}
&\epsilon_p(T_A,T_B)\epsilon_q(T_C,T_A)({\tr}(
ABC-ABC^{-1}-AB^{-1}C+AB^{-1}C^{-1}
))\\
&= -8\varepsilon_A\varepsilon_B\varepsilon_C\Big(
 C_aS_bS_c
 \cos\theta_p\cos\theta_q
+S_bS_c\cosh\left(\ell_{pq}-\frac{a}{2}\right)
\sin\theta_p\sin\theta_q
 \Big).
\end{align*}
\end{remark}
\subsubsection{Case II: $T_B$ intersects $T_C$ } 
Suppose the axes $T_A$ and $T_B$ intersect at $r$ at an angle $\theta_r$, measured anticlockwise from $A$ to $B$; the axes $T_B$ and $T_{C}$ intersect at $s$ at an angle $\theta_s, $ and measured anticlockwise from $B$ to $C$. Let $\ell_{sr}$ be the signed distance from $s$ to $r$ along $T_B$. Throughout this computation, we assume that $s$ lies in the negative direction from $r$ along $T_B$ (see Remark \ref{rem:intersection position} for the explanation). See the first figure of Figure \ref{fig: 3 int geod y_z}.
\begin{figure}[ht]
      \centering
     \includegraphics[width=8 cm]{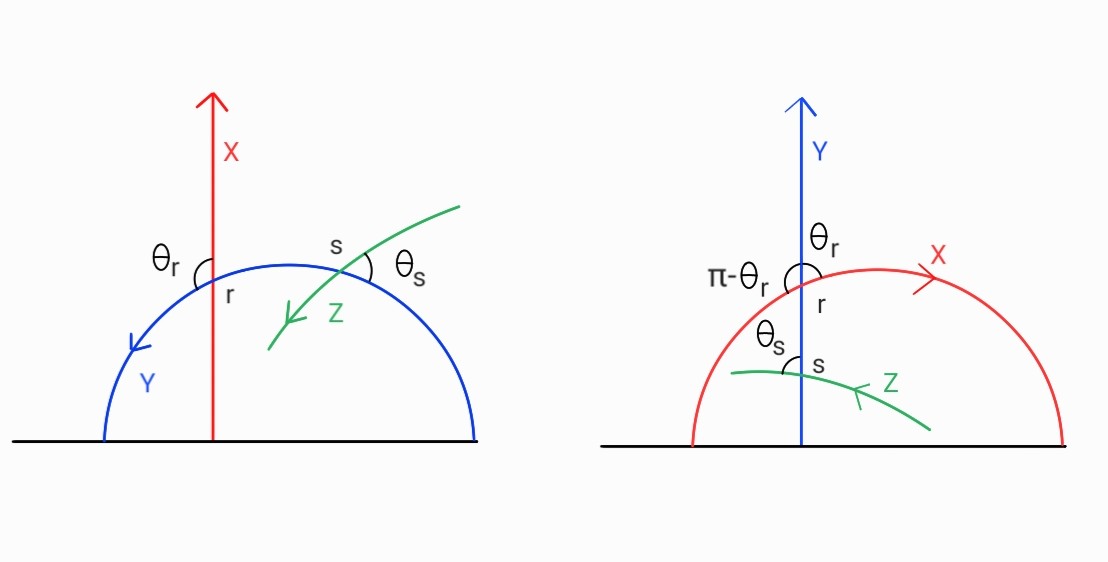}
      \caption{$T_A$ intersects $T_B$; $T_B$ intersects $T_C$}
      \label{fig: 3 int geod y_z}
 \end{figure}
\begin{lemma}\label{lem:triplec2}
    With the above description of $A,B,C\in SL_2(\mathbb{R})$,
     \begin{align*}
   \tr(N_{X}N_{Y})=& 
   2\epsilon_r(T_A, T_B)\cos\theta_r,
   \\
    \tr(N_{Y}N_{Z})=&  2\epsilon_s(T_{B},T_C)\cos\theta_s,\\
    \tr(N_{X}N_{Z})=& -2\epsilon_r(T_A,T_B)\epsilon_s(T_{C},T_B)(\cos\theta_r\cos\theta_s
-
\cosh\ell_{sr}\,\sin\theta_r\sin\theta_s)\\
\tr(N_{X}N_{Y}N_{Z})=& \ 2\epsilon_r(T_A,T_B)\epsilon_s(T_{C},T_B)\sinh(-\ell_{sr})\sin\theta_r\sin\theta_s
\end{align*}
\end{lemma}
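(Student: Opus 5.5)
The plan is to repeat the normalization used in the proofs of Lemma \ref{lem:double} and Lemma \ref{lem:triplec1}, but this time centred on the axis $T_B$, since $T_B$ is the axis that meets both of the others. Trace is invariant under conjugation, so I will conjugate simultaneously so that $r=i$ and $T_B$ is the positively oriented $Y$-axis. By Example \ref{ex:stdhyp} this gives $N_Y=\begin{pmatrix}1&0\\0&-1\end{pmatrix}$.

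\textbf{Step 1: placing $N_X$ and $N_Z$ in the base case.} I first treat the configuration of the first picture of Figure \ref{fig: 3 int geod y_z}, where $\epsilon_r(T_A,T_B)=1$ and $\epsilon_s(T_B,T_C)=1$, so that $\epsilon_s(T_C,T_B)=-1$.
\begin{itemize}
\item The angle $\theta_r$ is measured anticlockwise from $T_A$ to $T_B$. So $T_A$ is obtained from the $Y$-axis by rotating clockwise through $\theta_r$ about $i$. Conjugating $N_Y$ by $K_{-\theta_r}$ then gives
$$N_X=\begin{pmatrix}\cos\theta_r&\sin\theta_r\\ \sin\theta_r&-\cos\theta_r\end{pmatrix}.$$
\item The point $s$ lies at signed distance $-\ell_{sr}$ from $r$ along the $Y$-axis, so it is $e^{-\ell_{sr}}i$. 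Arguing exactly as for $N_Z$ in Lemma \ref{lem:triplec1}, with $D_{-\ell_{sr}}$ in place of $D_{\ell_{pq}}$, I get
$$N_Z=D_{-\ell_{sr}}N_{\theta_s}D_{-\ell_{sr}}^{-1}=\begin{pmatrix}\cos\theta_s&-e^{-\ell_{sr}}\sin\theta_s\\ -e^{\ell_{sr}}\sin\theta_s&-\cos\theta_s\end{pmatrix}.$$
\end{itemize}

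\textbf{Step 2: the traces by direct multiplication.}
\begin{itemize}
\item The identity $\tr(N_XN_Y)=2\cos\theta_r$ is Lemma \ref{lem:double}, which in this normalization just reads off the diagonal.
\item The identity $\tr(N_YN_Z)=2\cos\theta_s$ is immediate, because $N_Y$ is diagonal.
\item The diagonal entries of $N_XN_Z$ sum to $2\cos\theta_r\cos\theta_s-(e^{\ell_{sr}}+e^{-\ell_{sr}})\sin\theta_r\sin\theta_s$. This is $2(\cos\theta_r\cos\theta_s-\cosh\ell_{sr}\sin\theta_r\sin\theta_s)$, which agrees with the stated formula since $-\epsilon_r\epsilon_s(T_C,T_B)=1$.
\item Next, $N_XN_Y=\begin{pmatrix}\cos\theta_r&-\sin\theta_r\\ \sin\theta_r&\cos\theta_r\end{pmatrix}$. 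Multiplying by $N_Z$, the diagonal entries are $\cos\theta_r\cos\theta_s+e^{\ell_{sr}}\sin\theta_r\sin\theta_s$ and $-\cos\theta_r\cos\theta_s-e^{-\ell_{sr}}\sin\theta_r\sin\theta_s$. Their sum is $2\sinh\ell_{sr}\sin\theta_r\sin\theta_s$. This equals $2\cdot 1\cdot(-1)\sinh(-\ell_{sr})\sin\theta_r\sin\theta_s$, as claimed.
\end{itemize}

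\textbf{Step 3: the remaining sign cases.} The other configurations come from reversing the orientations of $T_A$, $T_B$, $T_C$. Reversing an axis replaces the corresponding $N$ by its negative and flips each intersection sign involving that axis. Each identity is multilinear in the $N$'s, and the product of signs on its right-hand side changes in exactly the same way, so every case follows from the base case.

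\textbf{Expected obstacle.} The only real difficulty is sign bookkeeping in Step 3.
\begin{itemize}
\item Reversing $T_B$ moves the $Y$-axis normalization itself. Rather than renormalize, I would treat it by negation: $N_Y\mapsto -N_Y$ flips both $\epsilon_r(T_A,T_B)$ and $\epsilon_s(T_C,T_B)$.
\item Reversing $T_B$ also swaps which direction along it counts as negative, so it reverses the sign convention for $\ell_{sr}$. I must check that this is absorbed consistently by the standing assumption that $s$ lies in the negative direction from $r$ (Remark \ref{rem:intersection position}).
\item I must also confirm that the clockwise rotation used for $N_X$ matches the anticlockwise-from-$T_A$-to-$T_B$ angle convention fixed in Lemma \ref{lem:double}.
\end{itemize}
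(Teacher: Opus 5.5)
Your proof follows the paper's route. You normalize so that $r=i$ and $T_B$ is the positively oriented $Y$-axis, and your $N_X$, $N_Y$, $N_Z$ are exactly the paper's matrices. The paper gets them by matching the configuration with Case I via $X\leftrightarrow Y^{-1}$, $Y\leftrightarrow X$, $\theta_p\leftrightarrow\pi-\theta_r$, $\ell_{pq}\leftrightarrow-\ell_{sr}$; you build them directly from $K_{-\theta_r}$ and $D_{-\ell_{sr}}$, which is equivalent. Your base-case multiplications are all correct, and so are your orientation checks: with these matrices $\theta_r$ is indeed the anticlockwise angle from $T_A$ to $T_B$, and $\epsilon_r(T_A,T_B)=\epsilon_s(T_B,T_C)=1$.

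The one thing that is wrong is the blanket claim in Step 3. Replacing $N_Y$ by $-N_Y$ flips $\tr(N_XN_YN_Z)$. But it flips \emph{both} $\epsilon_r(T_A,T_B)$ and $\epsilon_s(T_C,T_B)$, so their product on the right-hand side is unchanged. With $\ell_{sr}$ held fixed, your ``treat it by negation'' would therefore give the fourth identity with the wrong sign. The first three identities are unaffected.

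The clean fix is to observe that you never need to reverse $T_B$. If $T_B$ points downward, conjugate by $z\mapsto -1/z$, the rotation through $\pi$ about $i$. This preserves angles and intersection signs and makes $T_B$ point upward. By the standing assumption that $s$ lies on the negative side of $r$, it also sends $s$ to $e^{-\ell_{sr}}i$ with $\ell_{sr}>0$. So only $T_A$ and $T_C$ ever need reversing, and for those your multilinearity argument is correct.

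Alternatively, keep the negation of $N_Y$ but record that it turns the signed distance $\ell_{sr}$ into $-\ell_{sr}$, at the cost of leaving the standing assumption. The odd function $\sinh$ then absorbs the missing sign, and the even $\cosh$ in the third identity is unchanged. The paper itself only says the other cases come from reversing orientations, so making this explicit is worthwhile.
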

\begin{proof}
We conjugate simultaneously $X$, $Y$, $Z$, thereby transforming the first diagram in Figure \ref{fig: 3 int geod y_z} into the second diagram. 
Comparing the resulting diagram with the first diagram in Figure \ref{fig: 3 int geod x_z}, we observe that, in the present case, $X$, $Y$, $Z$ are equivalent respectively, to $Y^{-1}$, $X$, and $Z$ of Case I.
Also  $\theta_p $ is equivalent to $\pi-\theta_r$ and $\ell_{pq}$ is equivalent to $-\ell_{sr}$. Therefore
$$ 
N_X=
\begin{pmatrix}
\cos\theta_r &\sin\theta_r \\
\sin\theta_r &-\cos\theta_r
\end{pmatrix},
N_Y=
\begin{pmatrix}
1&0\\
0&-1
\end{pmatrix}, N_Z=
\begin{pmatrix}
\cos{\theta_s}&-e^{(-\ell_{sr})}\sin{\theta_s}\\
-e^{-(-\ell_{sr})}\sin{\theta_s}&-\cos{\theta_s}
\end{pmatrix}.
$$
A direct computation gives
 \begin{align*}
   \tr(N_{X}N_{Y})=&\
   2\epsilon_r(T_A, T_B)\cos\theta_r,\,\,
    \tr(N_{Y}N_{Z})= \ 2\epsilon_s(T_{B},T_C)\cos\theta_s,\\
    \tr(N_{X}N_{Z})=& \ 2\epsilon_r(T_A,T_B)\epsilon_s(T_{B},T_C)(\cos\theta_r\cos\theta_s
-
\cosh\ell_{sr}\,\sin\theta_r\sin\theta_s)\\
=&- 2\epsilon_r(T_A,T_B)\epsilon_s(T_{C},T_B)(\cos\theta_r\cos\theta_s-
\cosh\ell_{sr}\,\sin\theta_r\sin\theta_s)\\
\text{ and }
\tr(N_{X}N_{Y}N_{Z})=& \ -2\epsilon_r(T_A,T_B)\epsilon_s(T_{B},T_C)\sinh(-\ell_{sr})\,\sin\theta_r\sin\theta_s\\
=&\ 2\epsilon_r(T_A,T_B)\epsilon_s(T_{C},T_B)\sinh(-\ell_{sr})\,\sin\theta_r\sin\theta_s .
\end{align*} 
This completes the proof.
\end{proof}
\begin{corollary}\label{cor: sum ABC_2}
   Using the notation in Lemma \ref{lem:triplec2} 
\begin{align*}
    \frac12{\tr}(ABC)
    =&\ \varepsilon_{A}\varepsilon_{B}\varepsilon_{C}(C_{a}C_{b}C_{c}
+ S_{a}S_{b}C_{c}\epsilon_r(T_{A},T_{B})\cos\theta_r+ C_{a}S_{b}S_{c}\epsilon_s(T_{B},T_{C})\cos\theta_s
\\
&- S_{a}C_{b}S_{c}\epsilon_r(T_{A},T_{B})\epsilon_s(T_{C},T_{B})(\cos\theta_r\cos\theta_s
-
\cosh\ell_{sr} \,\sin\theta_r\sin\theta_s)\\
&+S_{a} S_{b} S_{c}\epsilon_r(T_{A},T_{B})\epsilon_s(T_{C},T_{B})\sinh(-\ell_{sr})\,\sin\theta_r\sin\theta_s).
\end{align*}
\end{corollary}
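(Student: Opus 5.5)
The corollary follows by substituting Lemma \ref{lem:triplec2} into the general expansion of Lemma \ref{lem:gen tr ident}, exactly as Corollary \ref{cor: sum ABC} was obtained from Lemma \ref{lem:triplec1}. By Lemma \ref{lem:gen tr ident},
\begin{align*}
\frac12\tr(ABC)=\varepsilon_A\varepsilon_B\varepsilon_C\Big(&C_aC_bC_c+S_aS_bC_c\tfrac12\tr(N_XN_Y)+S_aC_bS_c\tfrac12\tr(N_XN_Z)\\
&+C_aS_bS_c\tfrac12\tr(N_YN_Z)+S_aS_bS_c\tfrac12\tr(N_XN_YN_Z)\Big).
\end{align*}
This identity uses only the decompositions $A=\varepsilon_A(C_aI+S_aN_X)$ and their analogues from Lemma \ref{lem: rep hyp elmt}, together with $\tr N_X=\tr N_Y=\tr N_Z=0$. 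It therefore holds regardless of the geometric configuration of the axes.

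Next I substitute the four traces from Lemma \ref{lem:triplec2}, matching each to its coefficient:
\begin{itemize}
\item $\tfrac12\tr(N_XN_Y)=\epsilon_r(T_A,T_B)\cos\theta_r$, which produces the $S_aS_bC_c$ term;
\item $\tfrac12\tr(N_YN_Z)=\epsilon_s(T_B,T_C)\cos\theta_s$, which produces the $C_aS_bS_c$ term;
\item $\tfrac12\tr(N_XN_Z)=-\epsilon_r(T_A,T_B)\epsilon_s(T_C,T_B)(\cos\theta_r\cos\theta_s-\cosh\ell_{sr}\sin\theta_r\sin\theta_s)$, which produces the $S_aC_bS_c$ term with its leading minus sign;
\item $\tfrac12\tr(N_XN_YN_Z)=\epsilon_r(T_A,T_B)\epsilon_s(T_C,T_B)\sinh(-\ell_{sr})\sin\theta_r\sin\theta_s$, which produces the $S_aS_bS_c$ term.
\end{itemize}
Collecting these gives exactly the stated formula.

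There is no real obstacle here; the only care needed is bookkeeping with the signs. In Lemma \ref{lem:triplec2} the $N_XN_Z$ and $N_XN_YN_Z$ entries are expressed using $\epsilon_s(T_C,T_B)$, whereas the $N_YN_Z$ entry uses $\epsilon_s(T_B,T_C)$. I will keep each entry in the form the lemma gives it, without converting via $\epsilon_s(T_C,T_B)=-\epsilon_s(T_B,T_C)$, since that is how the corollary is stated. I will also check that the factor $\tfrac12$ is applied to each trace exactly once.
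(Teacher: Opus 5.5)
Your proposal is correct and is exactly the intended argument: the paper gives this corollary no separate proof. It is obtained, just like the analogous corollary in Case I, by substituting the four traces from Lemma \ref{lem:triplec2} into the general expansion of $\frac12\tr(ABC)$. Keeping $\epsilon_s(T_B,T_C)$ in the $C_aS_bS_c$ term and $\epsilon_s(T_C,T_B)$ in the last two terms reproduces the stated formula term by term.
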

\begin{lemma}\label{lem: sum 2}
   With the same assumption, we have
    \begin{align*}
&\epsilon_r(T_A,T_B)\epsilon_s(T_{C},T_B)({\tr}(
ABC-ABC^{-1}+AB^{-1}C-AB^{-1}C^{-1}
)) \\
&=8\varepsilon_{A}\varepsilon_{B}\varepsilon_{C}
 \Big(- S_{a}C_{b}S_{c}\cos\theta_r\cos\theta_s+S_{a}S_{c}\cosh\big(\ell_{rs}-\frac{b}{2}\big)\sin\theta_r\sin\theta_s\Big).
 \end{align*}

\end{lemma}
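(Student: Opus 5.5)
The plan mirrors the proof of Lemma \ref{lem: sum 1}. Corollary \ref{cor: sum ABC_2} gives $\frac12\tr(ABC)$ in terms of $\frac12\tr(N_XN_Y)$, $\frac12\tr(N_YN_Z)$, $\frac12\tr(N_XN_Z)$ and $\frac12\tr(N_XN_YN_Z)$. I will obtain the other three traces $\frac12\tr(ABC^{-1})$, $\frac12\tr(AB^{-1}C)$ and $\frac12\tr(AB^{-1}C^{-1})$ from the same expansion in Lemma \ref{lem:gen tr ident}. Replacing $C$ by $C^{-1}$ flips $N_Z\mapsto -N_Z$, and replacing $B$ by $B^{-1}$ flips $N_Y\mapsto -N_Y$; in both cases $\varepsilon_A\varepsilon_B\varepsilon_C$ and the lengths are unchanged. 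Equivalently, one can use $\epsilon_r(T_A,T_{B^{-1}})=-\epsilon_r(T_A,T_B)$ and $\epsilon_s(T_{C^{-1}},T_B)=-\epsilon_s(T_C,T_B)$, while noting that $\epsilon_s(T_B,T_C)$ also changes sign under either inversion.

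Next I track the sign of each of the five terms in the combination $ABC-ABC^{-1}+AB^{-1}C-AB^{-1}C^{-1}$, which has coefficients $(+,-,+,-)$. Each term carries a sign pattern under the flips $(N_Y,N_Z)\mapsto(\pm N_Y,\pm N_Z)$.
\begin{itemize}
\item $C_aC_bC_c$ has pattern $(+,+,+,+)$, so it cancels.
\item The $N_XN_Y$ term has pattern $(+,+,-,-)$, so it cancels.
\item The $N_YN_Z$ term has pattern $(+,-,-,+)$, so it cancels.
\item The $N_XN_Z$ term has pattern $(+,-,+,-)$, so it survives with multiplicity $4$.
\item The $N_XN_YN_Z$ term has pattern $(+,-,-,+)$, so it cancels.
\end{itemize}
The triple term dropping out is the difference from Case I. There $B$ and $C$ entered with signs $(+,-,-,+)$, and the surviving terms were $N_YN_Z$ and $N_XN_YN_Z$.

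At this point the combination equals $4\varepsilon_A\varepsilon_B\varepsilon_C S_aC_bS_c\,\tr(N_XN_Z)$. By Lemma \ref{lem:triplec2} this is
\[
-8\varepsilon_A\varepsilon_B\varepsilon_C S_aC_bS_c\,\epsilon_r(T_A,T_B)\epsilon_s(T_C,T_B)\bigl(\cos\theta_r\cos\theta_s-\cosh\ell_{sr}\sin\theta_r\sin\theta_s\bigr).
\]
Multiplying by $\epsilon_r(T_A,T_B)\epsilon_s(T_C,T_B)$, whose square is $1$, gives
\[
8\varepsilon_A\varepsilon_B\varepsilon_C\bigl(-S_aC_bS_c\cos\theta_r\cos\theta_s+S_aS_c\,C_b\cosh\ell_{sr}\sin\theta_r\sin\theta_s\bigr).
\]

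This does not yet match the claimed $\cosh(\ell_{rs}-\frac b2)$, which suggests my sign bookkeeping for the triple term is off. The main obstacle is therefore to recheck the sign pattern of the $N_XN_YN_Z$ term. The cleaner route is the cyclic rewriting used in Lemma \ref{lem: sum 1}: rewrite the combination so that $B$ sits in the middle position, for instance via $\tr(ABC)=\tr(CAB)$ and inversion invariance $\tr(M)=\tr(M^{-1})$, so that the terms read as $C^{\pm}$, $B$-part, $A^{\pm}$ combinations in which $B$ appears with both signs. That should make the $C_b$ and $S_b$ contributions combine as $C_b\cosh\ell_{sr}-S_b\sinh\ell_{sr}$.

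If the combination is arranged so that the $N_XN_Z$ and $N_XN_YN_Z$ terms both survive, the coefficient of $\sin\theta_r\sin\theta_s$ becomes $S_aS_c\bigl(C_b\cosh\ell_{sr}+S_b\sinh(-\ell_{sr})\bigr)=S_aS_c\cosh(\ell_{sr}+\frac b2)$. Because $s$ lies in the negative direction from $r$, we have $\ell_{sr}=-\ell_{rs}$ up to the convention, so this equals $S_aS_c\cosh(\ell_{rs}-\frac b2)$, as claimed. Concretely, I would recompute the four traces directly from the explicit $N_X,N_Y,N_Z$ in the proof of Lemma \ref{lem:triplec2}, with $N_Y\to\pm N_Y$ and $N_Z\to\pm N_Z$. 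That pins down which terms survive, and the addition formula $\cosh(u-v)=\cosh u\cosh v-\sinh u\sinh v$ finishes the argument.
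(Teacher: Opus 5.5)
Your sign table is correct, and that is exactly why your last step cannot be carried out. For the words as written, $B\mapsto B^{-1}$ and $C\mapsto C^{-1}$ act precisely as $N_Y\mapsto -N_Y$ and $N_Z\mapsto -N_Z$ in the expansion of $\frac12\tr(ABC)$. So the left-hand side is $4\varepsilon_A\varepsilon_B\varepsilon_C S_aC_bS_c\tr(N_XN_Z)$, which after multiplying by $\epsilon_r(T_A,T_B)\epsilon_s(T_C,T_B)$ becomes $8\varepsilon_A\varepsilon_B\varepsilon_C S_aC_bS_c(-\cos\theta_r\cos\theta_s+\cosh\ell_{sr}\sin\theta_r\sin\theta_s)$. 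Since $\ell_{rs}=b-\ell_{sr}$ gives $S_aS_c\cosh(\ell_{rs}-\frac b2)=S_aC_bS_c\cosh\ell_{sr}-S_aS_bS_c\sinh\ell_{sr}$, this differs from the claimed right-hand side by $8\varepsilon_A\varepsilon_B\varepsilon_C S_aS_bS_c\sinh\ell_{sr}\sin\theta_r\sin\theta_s$. That difference is nonzero because $\ell_{sr}>0$.

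So, read literally, the identity is false. The paper's proof does not resolve this: it asserts, by analogy with Case I, an intermediate formula in which the $S_aS_bS_c\tr(N_XN_YN_Z)$ term survives, and the expansion contradicts this for these four words. Your proposed remedy cannot help either. Cyclic permutation and inversion leave each individual trace unchanged, so no rearrangement of the same words makes the triple term reappear. Your closing display also has two slips that happen to cancel: $C_b\cosh u-S_b\sinh u=\cosh(u-\frac b2)$, not $\cosh(u+\frac b2)$, and the paper's convention is $\ell_{rs}+\ell_{sr}=b$, not $\ell_{sr}=-\ell_{rs}$.

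The missing idea is that the words themselves must change. Let $w=x*_ry$ and $w'=x*_ry^{-1}$, and let $A$ be the lift of $x$ through the lift of $r$ lying ahead of $s$ on $T_B$, as in the Case II normalization. Use the same lifting conventions that make the four words of Case I correct. Then $z*_s w^{\pm1}$ are represented by $CAB$ and $CB^{-1}A^{-1}$. However, $z*_s (w')^{\pm1}$ are represented by $CB^{-1}A$ and $CA^{-1}B$. The reason is that traversing $w'$ from $s$ moves backwards along $T_B$ and first meets the \emph{preceding} lift of $r$. Hence the words $CAB^{-1}$ and $CBA^{-1}$ are correct only with $A$ replaced by $B^{-1}AB$.

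The relevant combination is therefore $\tr(ABC)-\tr(ABC^{-1})+\tr(B^{-1}AC)-\tr(B^{-1}AC^{-1})$. For traceless $2\times2$ matrices, $N_XN_Y+N_YN_X=\tr(N_XN_Y)I$, so $\tr(N_YN_XN_Z)=-\tr(N_XN_YN_Z)$. Consequently $\tr(B^{-1}AC^{\pm1})$ agrees with $\tr(AB^{-1}C^{\pm1})$ except that the triple term has the opposite sign. Now both the $N_XN_Z$ and the $N_XN_YN_Z$ terms survive with multiplicity $4$, which reproduces the paper's intermediate display exactly. Finally, $C_b\cosh\ell_{sr}-S_b\sinh\ell_{sr}=\cosh(\ell_{sr}-\frac b2)=\cosh(\ell_{rs}-\frac b2)$ yields the stated right-hand side.
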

\begin{proof}
Now  by Lemma \ref{lem:gen tr 
ident}, Lemma \ref{lem: sum 2} and a similar computation as in Lemma \ref{lem: sum 1} gives
\begin{align*}
    \frac 12 \tr(ABC)&-\frac 12 \tr(ABC^{-1})+ \frac 12 (AB^{-1}C)- \frac 12 (AB^{-1}C^{-1})\\
    = &4\varepsilon_{A}\varepsilon_{B}\varepsilon_{C}\Big(- S_{a}C_{b}S_{c}\epsilon_r(T_{A},T_{B})\epsilon_s(T_{C},T_{B})(\cos\theta_r\cos\theta_s
-
\cosh\ell_{sr}\,\sin\theta_r\sin\theta_s)\\
 &+ S_{a} S_{b} S_{c}\epsilon_r(T_{A},T_{B})\epsilon_s(T_{C},T_{B})\sinh(-\ell_{sr})\,\sin\theta_r\sin\theta_s\Big).
 \end{align*}
Further simplification together with the fact $\ell_{rs}+\ell_{sr}=b$ yields the identity.
\end{proof}

\begin{lemma}\label{Lem: Sign}
For intersecting godesics  $x$ and $y$ at $p$ we have Then $\varepsilon_{x*_py}=\varepsilon_x\varepsilon_y$ and $\varepsilon_{x*_py^{-1}}=\varepsilon_x\varepsilon_y$.
\end{lemma}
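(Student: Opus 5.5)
The plan is to reduce the statement to Remark \ref{rem:sign}, applied once to the pair $(A,B)$ and once to the pair $(A,B^{-1})$. Fix a representation $\Phi$ in the Fuchsian locus and lift the intersection point $p$ to $\H$. Choose $A=\Phi(x)$ and $B=\Phi(y)$ in $SL_2(\R)$ as deck transformations whose axes $T_A$ and $T_B$ pass through the chosen lift of $p$. With this choice the loop product $x*_py$ is represented by $AB$, and $x*_py^{-1}$ is represented by $AB^{-1}$; this is the standard description of the loop product at $p$ in terms of based loops. By definition $\varepsilon_x$ is the sign of $\tr(A)$, $\varepsilon_y$ the sign of $\tr(B)$, and $\varepsilon_{x*_py}$ the sign of $\tr(AB)$. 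So it is enough to show that $\operatorname{sign}\tr(AB)=\varepsilon_A\varepsilon_B$ and $\operatorname{sign}\tr(AB^{-1})=\varepsilon_A\varepsilon_B$.

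For the first product, Lemma \ref{lem:double} gives
\[
\tfrac12\tr(AB)=\varepsilon_A\varepsilon_B\bigl(C_aC_b+\epsilon_p(T_A,T_B)S_aS_b\cos\theta_p\bigr).
\]
The bracket is at least $C_aC_b-S_aS_b=\cosh\frac{a-b}{2}\geq 1>0$. Hence $\tr(AB)$ has sign $\varepsilon_A\varepsilon_B$, which is exactly the argument of Remark \ref{rem:sign}. It also shows $|\tr(AB)|\geq 2$, with strict inequality because $\Phi$ is Fuchsian and $x*_py$ is nontrivial, so $AB$ is hyperbolic.

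For the second product, Lemma \ref{lem: rep hyp elmt} gives $B^{-1}=\varepsilon_B(C_bI-S_bN_Y)$. Thus $B^{-1}$ has the same sign $\varepsilon_B$ and the same translation length $b$, with $N_Y$ replaced by $-N_Y$. Applying the same computation gives
\[
\tfrac12\tr(AB^{-1})=\varepsilon_A\varepsilon_B\bigl(C_aC_b-\epsilon_p(T_A,T_B)S_aS_b\cos\theta_p\bigr),
\]
and this bracket is again positive. Therefore $\varepsilon_{x*_py^{-1}}=\varepsilon_A\varepsilon_B=\varepsilon_x\varepsilon_y$.

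The one point needing care is that $\varepsilon_x$ as defined in the preliminaries does not depend on the chosen lift or on conjugation. This holds because the lift $\tilde\Phi$ is fixed and varies continuously on $\T$, and trace is a conjugation invariant; so $\varepsilon_x=\operatorname{sign}\tr\tilde\Phi(x)$ is well defined. The same reasoning makes $\varepsilon_{x*_py}$ well defined, since the loop product is a well-defined free homotopy class. I expect the main obstacle to be this bookkeeping step, namely identifying the loop products at $p$ with $AB$ and $AB^{-1}$ for lifts whose axes meet at the same lift of $p$. The trace inequalities themselves are immediate.
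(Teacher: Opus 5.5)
Your proposal is correct and follows the same route as the paper, whose proof just invokes Remark \ref{rem:sign}: the sign of $\tr(AB)$ is $\varepsilon_A\varepsilon_B$ because the bracket $C_aC_b+\epsilon_p(T_A,T_B)S_aS_b\cos\theta_p$ is bounded below by $C_aC_b-S_aS_b>0$, and the $AB^{-1}$ case follows in the same way from $B^{-1}=\varepsilon_B(C_bI-S_bN_Y)$. What you add is to make explicit two steps the paper leaves implicit, namely the $B^{-1}$ case and the identification of $x*_py$ and $x*_py^{-1}$ with $AB$ and $AB^{-1}$ under the fixed homomorphic lift.
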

\begin{proof}
 This follows from Remark \ref{rem:sign}. 
 \end{proof}
\section{Poisson bracket of trace and length functions}\label{sec:Poisson bracket of trace and length functions}
In this section, we connect Wolpert's description of the Weil-Petersson symplectic form on the Teichm\"{u}ller space and the Poisson bracket of length functions, using Goldman's topological formalism. In \cite{MR762512}, Goldman proved that  Atiyah-Bott-Goldman symplectic form is a scalar multiple of the Weil-Petersson symplectic form.  As the exact value of the constant is not relevant to our current discussion, we prove all the results in terms of Atiyah-Bott-Goldman symplectic form. For a detailed discussion about the exact value of the constant, see \cite{MR4911761}.

Throughout this section, we fix a point $\Phi\in \T$ and describe all the geometric objects with respect to $\Phi$. As described in Section \ref{sec:basic}, we consider $\Phi$ as an element of $\M_W$ and choose a lift of $\Phi$ to $\M$ which varies continuously on $\T$. We also fix this lift for our discussion in this section. 

Suppose $x,y$ and $z$ are three oriented geodesics on  $\Sigma$ with respect to $\Phi$ such that:  $x$ intersects $y$ at a point $p$ with intersection angle $\theta_p$,
   
Identify the $\pi_1(\Sigma,p)$ as a subgroup of $PSL(2,\mathbb R)$. Suppose $\tilde A,\tilde B, $ and $\tilde C\in PSL(2,\mathbb R)$ represent the deck transformations associated to the oriented geodesics $x,y, z$, with translation lengths $a=\ell_x, b=\ell_y $ and $  c=\ell_z$ respectively. Let $A,B$ and $C\in SL_2(\R)$ be their lifts to $SL_2(\R)$  with respect to the lift of $\Phi$.  By Lemma \ref{lem: rep hyp elmt} we  write $
A=\varepsilon_x(C_aI+S_aN_X), B=\varepsilon_y(C_bI+S_bN_Y),$ and $ C=\varepsilon_z(C_cI+S_cN_Z).
$ where $\varepsilon_x=\varepsilon_A, \varepsilon_y=\varepsilon_B$ and $\varepsilon_z=\varepsilon_C$.

For any intersection points $p \in x\cap y$ and $q\in x\cap z$  we choose normalized lifts of $T_A, T_B$ and $T_C$ such that $T_A$ is the $Y$ axis and $p=i$. Similarly, for any intersection points $r \in x\cap y$ and $s\in y\cap z$, we choose normalized lifts of $T_A, T_B$ and $T_C$ such that $T_A$ is the $Y$ axis and $p=i$. Abusing notation, we denote the lift of a point by the same notation.  

\begin{remark}\label{rem:intersection position}
With the above notation, it follows from Figure \ref{fig:int z to xy} that $q$ lies in the positive direction from $p$ along $T_A$ and $s$ lies in the negative direction from $r$ along $T_B$. 
\end{remark}

\subsection{Poisson bracket of trace functions} We compute the Poisson bracket of trace functions using Goldman's theorem \ref{thm:Goldman}. 

\begin{theorem}\label{thm:cosine trace}
    For two intersecting geodesics $x$ and $y$ on $\Sigma$, we have $$\{\tr_x,\tr_y\}=  2\sinh\frac{a}{2}\sinh \frac{b}{2}\sum_{p\in x\cap y}  \cos\theta_p.$$
\end{theorem}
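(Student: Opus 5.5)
The plan is to combine Goldman's formula (Theorem~\ref{thm:Goldman}) with the two-isometry trace identity of Corollary~\ref{cor:double}, one intersection point at a time. By Theorem~\ref{thm:Goldman},
$$\{\tr_x,\tr_y\}=\frac12\sum_{p\in x\cap y}\epsilon_p(x,y)\big(\tr_{x*_py}-\tr_{x*_py^{-1}}\big),$$
so it is enough to show that each summand equals $4S_aS_b\cos\theta_p$, up to the global sign discussed in the last paragraph.

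\textbf{Step 1 (realizing the loop products as matrix products).} Fix $p\in x\cap y$ and identify $\pi_1(\Sigma,p)$ with a subgroup of $PSL_2(\R)$ by choosing a lift $\tilde p\in\H$. Let $\tilde A,\tilde B$ be the deck transformations of the based loops $x$ and $y$ at $p$. Their axes $T_A$ and $T_B$ are the lifts of the geodesics $x$ and $y$ through $\tilde p$. They cross at $\tilde p$ at the angle $\theta_p$, measured anticlockwise from $T_A$ to $T_B$, and with sign $\epsilon_{\tilde p}(T_A,T_B)=\epsilon_p(x,y)$, because the covering map preserves orientation. The loop product $x*_py$ is the concatenation of the two based loops, so its deck transformation is $\tilde A\tilde B$ (or $\tilde B\tilde A$, depending on the composition convention). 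Likewise $x*_py^{-1}$ corresponds to $\tilde A\tilde B^{-1}$. Since we use the fixed lift of $\Phi$ to $\M$, which is a homomorphism to $SL_2(\R)$, we get $\tr_{x*_py}=\tr(AB)$ and $\tr_{x*_py^{-1}}=\tr(AB^{-1})$, where $A,B$ are the $SL_2(\R)$ lifts. The ordering ambiguity is harmless because $\tr(AB)=\tr(BA)$. This identification is the step I expect to need the most care: the two axes must pass through the \emph{same} lift $\tilde p$, and the sign $\epsilon_p$ must match the angle convention of Lemma~\ref{lem:double}.

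\textbf{Step 2 (local term).} Corollary~\ref{cor:double} applies directly and gives
$$\epsilon_p(x,y)\big(\tr(AB)-\tr(AB^{-1})\big)=4\varepsilon_A\varepsilon_B S_aS_b\cos\theta_p=4\varepsilon_x\varepsilon_yS_aS_b\cos\theta_p.$$
The factor $\varepsilon_x\varepsilon_y$ does not depend on $p$, since it depends only on the free homotopy classes and the chosen lift. Lemma~\ref{Lem: Sign} confirms that this sign is consistent with $\varepsilon_{x*_py}=\varepsilon_{x*_py^{-1}}=\varepsilon_x\varepsilon_y$. Summing over $p$ and multiplying by $\frac12$ gives
$$\{\tr_x,\tr_y\}=2\varepsilon_x\varepsilon_y\sinh\tfrac a2\sinh\tfrac b2\sum_{p\in x\cap y}\cos\theta_p.$$

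\textbf{Step 3 (the sign).} It remains to deal with the factor $\varepsilon_x\varepsilon_y$. This is the subtle point. I would handle it by choosing the representatives $A,B$ (equivalently, working with $\varepsilon_x\tr_x=|\tr_x|$) so that $\varepsilon_x=\varepsilon_y=1$, which yields exactly the stated formula. As a consistency check, dividing by $\frac{d\tr}{d\ell}=\varepsilon\sinh\frac{\ell}{2}$ recovers the sign-free cosine formula (Theorem~\ref{thm:cosine form}). I would state this normalization explicitly in the proof: with an arbitrary lift, the signs $\varepsilon_x\varepsilon_y$ genuinely appear in front of the sum.
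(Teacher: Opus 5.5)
Your argument has the same skeleton as the paper's (Theorem~\ref{thm:Goldman}, then Corollary~\ref{cor:double} at each intersection point). The difference is in how the sign is handled, and your handling is the right one.

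The paper writes $\tr_{x*_py}=\varepsilon_{x*_py}\tr(AB)$ and $\tr_{x*_py^{-1}}=\varepsilon_{x*_py^{-1}}\tr(AB^{-1})$. It then pulls out $\varepsilon_x\varepsilon_y$ via Lemma~\ref{Lem: Sign} and lets it cancel the factor $\varepsilon_A\varepsilon_B=\varepsilon_x\varepsilon_y$ produced by Corollary~\ref{cor:double}. But when $A,B$ are the images under the fixed $SL_2(\R)$-lift, the homomorphism property gives $\tr_{x*_py}=\tr(AB)$ exactly, as in your Step~1. So one factor survives, and $\{\tr_x,\tr_y\}=2\varepsilon_x\varepsilon_yS_aS_b\sum_{p\in x\cap y}\cos\theta_p$.

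Your chain-rule check confirms this. It gives $\{\tr_x,\tr_y\}=\varepsilon_x\varepsilon_yS_aS_b\{\ell_x,\ell_y\}$, where $\{\ell_x,\ell_y\}$ does not depend on the lift. The factor $\varepsilon_x\varepsilon_y$ does: replacing $\tilde\Phi$ by $\chi\tilde\Phi$, with $\chi\in H^1(\Sigma;\mathbb{Z}/2)$, multiplies it by $\chi(x)\chi(y)$. So the sign-free display is really a statement about $|\tr_x|=\varepsilon_x\tr_x$ and $|\tr_y|$. The paper's chain of equalities becomes correct if every $\tr_w$ in it is read as $|\tr_w|$; Lemma~\ref{Lem: Sign} is exactly what lets Goldman's formula pass to $|\tr|$ on the Fuchsian locus. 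This is also the positive-trace convention the paper adopts later for length functions.

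One correction to your Step~3. ``Choosing the representatives $A,B$ so that $\varepsilon_x=\varepsilon_y=1$'' is not equivalent to passing to $|\tr|$, and it is not always possible. The matrices $A,B$ are fixed by the lift, and changing the lift only multiplies $\varepsilon_x$ by $\chi(x)$. For instance, if $x$ bounds an embedded one-holed torus, then $\tr_x$ is the trace of a commutator, which is $<-2$ for every lift.

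So finish with the version in your parenthesis: $\{|\tr_x|,|\tr_y|\}=\varepsilon_x\varepsilon_y\{\tr_x,\tr_y\}=2S_aS_b\sum_{p\in x\cap y}\cos\theta_p$. This is valid because the $\varepsilon$'s are locally constant on the Fuchsian locus. Alternatively, keep the factor $\varepsilon_x\varepsilon_y$ in the statement, as you suggest.
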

\begin{proof}
\begin{align*}
  \{\tr_x,\tr_y\}=&\frac 12\sum_{p\in x\cap y}\epsilon_p(x,y) (\tr_{x*_py}-\tr_{x*_py^{-1}}) \text{ (by Theorem \ref{thm:Goldman})}\\
  =&\frac 12\sum_{p\in x\cap y}\epsilon_p(x,y) \big(\varepsilon_{x*_py}\tr(AB)-\varepsilon_{x*_py^{-1}}\tr(AB^{-1})\big)\\
  =& \frac 12 \varepsilon_x\varepsilon_y \sum_{p\in x\cap y}\epsilon_p(x,y) \big(\tr(AB)-\tr(AB^{-1})\big)  \text{ (by Lemma \ref{Lem: Sign}) \quad}\\
=& \frac 12 \varepsilon_x\varepsilon_y \sum_{p\in x\cap y} 4\varepsilon_A\varepsilon_B \sinh\frac{a}{2}\sinh \frac{b}{2} \cos\theta_p \text{ (by Corollary \ref{cor:double})}.
\end{align*} 
This completes the proof.
\end{proof}


\begin{figure}[ht]
    \includegraphics[height= 4cm, width= 8cm]{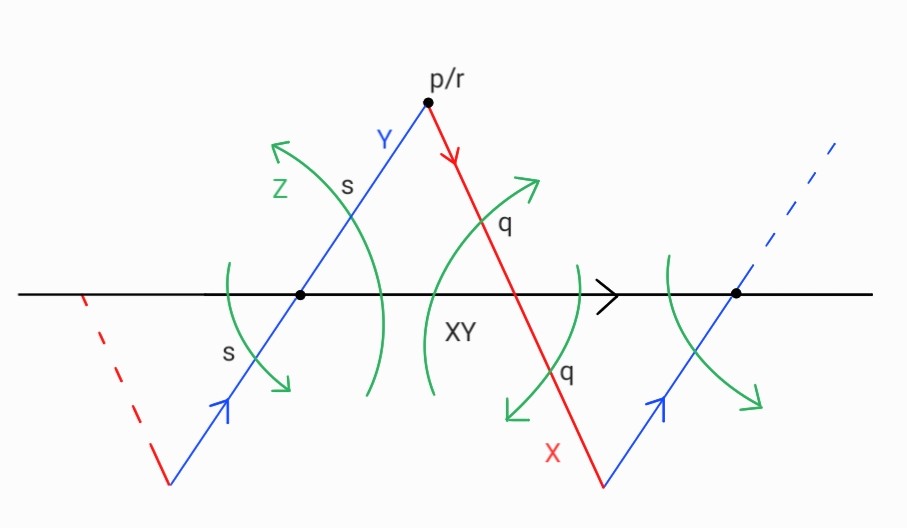}
    \caption{}
    \label{fig:int z to xy}
\end{figure}

\begin{lemma}\label{lem: z with x*y int}
    With the above description of $x,y,z$, if $z$ intersects $x*_py$, $z$ must intersect either $x$ or $y$. Similarly, if $z$ intersects $x*_py^{-1}$, $z$ must intersects either $x$ or $y$.
\end{lemma}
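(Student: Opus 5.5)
The natural approach is topological and uses only the fact that the loop product $x*_py$ (and likewise $x*_py^{-1}$) is represented by an explicit closed curve supported on $x\cup y$. Concretely, $x*_py$ is the loop which starts at $p$, traverses $x$ once, and then traverses $y$ once (or $y^{-1}$ for $x*_py^{-1}$). As a set it is exactly $x\cup y$. The statement should be read as follows: if $z$ intersects the geodesic representative of $x*_py$ (or, in the case of interest, a suitable representative), then $z$ intersects $x$ or $y$. Equivalently, if $z$ is disjoint from $x\cup y$, then $z$ can be isotoped off a representative of $x*_py$.

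The plan has three steps. \textbf{First}, prove the contrapositive. Suppose $z\cap x=\emptyset$ and $z\cap y=\emptyset$. The concatenated loop $\gamma=x\cdot y$ based at $p$ is a closed curve freely homotopic to $x*_py$, and its image is contained in $x\cup y$, so $\gamma\cap z=\emptyset$. Smoothing the corner of $\gamma$ at $p$ inside a small disk neighbourhood $U$ of $p$ does not meet $z$. Here we choose $U$ disjoint from $z$, which is possible because $z$ is compact and misses $p$. This yields a transversal representative of $x*_py$ disjoint from $z$. The same argument applies verbatim to $x\cdot y^{-1}$.

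\textbf{Second}, pass from this representative to the geodesic representative. We would use the standard fact that geodesics realize the minimal geometric intersection number $i(\cdot,\cdot)$ between free homotopy classes on a hyperbolic surface. Since some representative of $x*_py$ is disjoint from $z$, we get $i(x*_py,z)=0$. The geodesic representatives therefore do not intersect transversally: they are either disjoint or share a common closed geodesic. In the context of the paper, "intersect" means transverse intersection points that contribute to Goldman's sum in Theorem \ref{thm:Goldman}. Hence $z$ does not intersect $x*_py$ in that sense, which contradicts the hypothesis. The same holds for $x*_py^{-1}$.

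\textbf{Third}, record the refinement that is presumably used later, in light of Figure \ref{fig:int z to xy} and Remark \ref{rem:intersection position}. The intersection points of $z$ with the piecewise-geodesic representative $x\cdot y$ are in natural bijection with the points of $(z\cap x)\sqcup(z\cap y)$. This bijection is what lets one split Goldman's bracket $\{\tr_z,\tr_{x*_py}\}$ into the two Cases I and II of Section \ref{sec:trace id}. The main obstacle is exactly this refinement rather than the bare existence statement. One must check that no bigons form between $z$ and $x\cdot y$, so that the count is minimal and each intersection is correctly attributed. I would first handle this by lifting to $\H$ and arguing with the axes $T_A,T_B,T_C$. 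A lift of $z$ that meets the broken geodesic made of $T_A$ and $T_B$ joined at $p=i$ must cross $T_A$ or $T_B$. Two geodesics in $\H$ meet at most once, so this crossing is unique, which rules out bigons. For the lemma as stated, however, only the first two steps are needed.
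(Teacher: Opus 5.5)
Your first two steps prove the lemma correctly, but by a different route from the paper. The paper's proof is a one-line convexity argument in $\H$. A lift of $z$ that crosses the axis of $x*_py$ enters the convex region bounded by lifts of $x$, $y$ and $x*_py$ (Figure \ref{fig:sign x y z}), and so must leave it through a side lying on a lift of $x$ or of $y$. You instead argue by contraposition. If $z$ misses $x\cup y$, the concatenated loop is a representative of $x*_py$ disjoint from $z$, so the geometric intersection number is zero. Closed geodesics realize this number: concretely, two axes cross transversally exactly when their endpoints on $\partial\H$ are linked, and linked endpoints force any invariant lifts of freely homotopic curves to cross. Hence $z$ and the geodesic of $x*_py$ have no transverse intersection. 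Your argument needs no picture and no information about where the axis of $AB$ sits relative to $T_A$ and $T_B$, and it treats $x*_py^{-1}$ identically. Its cost is that it relies on the minimal-position property of geodesics and gives only existence. The paper's argument is local: it produces the crossing with $x$ or $y$ next to the given crossing with $x*_py$, in the same configuration the paper reuses for the sign analysis of Lemma \ref{lem: sign pat}. Reading ``intersect'' as transverse intersection, as you do, is the sense relevant to Goldman's formula.

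Your third step is not needed for the lemma, but it is wrong and should be discarded. The crossings of $z$ with the perturbed concatenation $x\cdot y$ need not be in minimal position. Take $x,y$ simple, meeting once, and let $z$ be the geodesic of $x*_py$. Then $z$ meets each of $x$ and $y$ exactly once, yet has no transverse intersection with $x*_py$, so those two crossings bound a bigon. The flaw in your lifting argument is that the lift of $x\cdot y$ is not $T_A\cup T_B$ joined at $i$. It is an infinite $AB$-invariant broken geodesic whose pieces lie on translates of $T_A$ and $T_B$, and ``two geodesics meet at most once'' controls each piece, not the whole broken line. The later splitting of $\{\tr_z,\tr_{x*_py}\}$ does not need minimality. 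It only needs that Goldman's formula may be evaluated on any representatives meeting transversally in double points, since contributions from non-minimal crossings cancel in pairs.
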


\begin{proof}
    The convexity of the region enclosed by $X,Y$ and $XY$ in Figure \ref{fig:sign x y z}, the conclusion follows.
\end{proof}
\begin{figure}[h]
    \includegraphics[height= 4cm, width= 6.2cm]{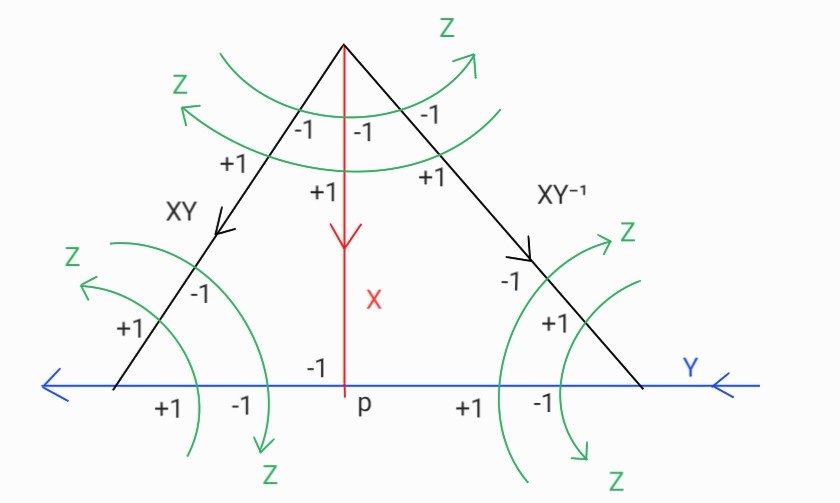}
    \includegraphics[height= 4cm, width= 6cm]{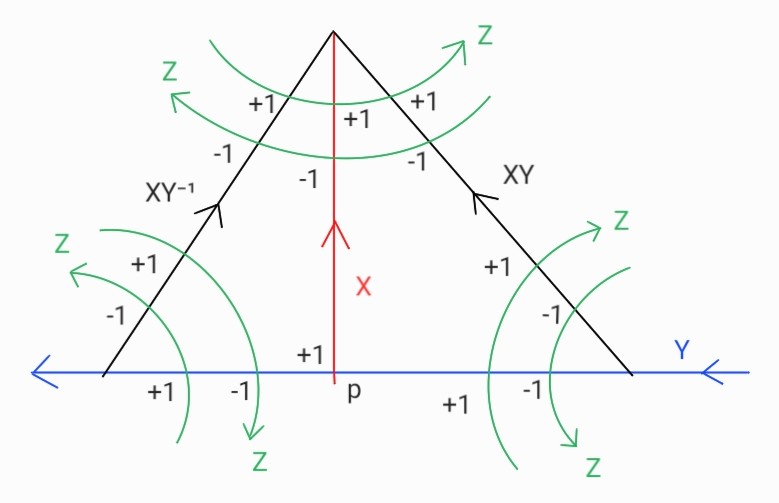}
    \caption{}
    \label{fig:sign x y z}
\end{figure}

\begin{lemma}\label{lem: sign pat}
    With the above description of $x,y,z$,
    \[
\epsilon_q(z,x*_py)=
\begin{cases}
\epsilon_q(z,x), & q\in z\cap x, \\
\epsilon_q(z,y),  & q\in z\cap y.
\end{cases}
\quad
\text{ and}
\quad
\epsilon_q(z,x*_py^{-1})=
\begin{cases}
\epsilon_q(z,x), & q\in z\cap x, \\
-\epsilon_q(z,y),  & q\in z\cap y.
\end{cases}
\]
    Therefore, both are independent of the sign $\epsilon_p.$
\end{lemma}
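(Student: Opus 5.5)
The argument is local: each intersection point of $z$ with $x*_py$ will be identified with an intersection point of $z$ with $x$ or with $y$, and the two crossings will be compared at that point.

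\textbf{Setup.} Realize $x*_py$ by its geodesic representative. In the universal cover, with the normalized lifts already fixed (so $T_A$ is the $Y$-axis and $p=i$), the axis $T_{AB}$ of $AB$ is the third side of the convex configuration in Figure~\ref{fig:sign x y z}. That region is bounded by $T_A$, $T_B$ and $T_{AB}$. Equivalently, $x*_py$ is homotopic to the piecewise geodesic loop that runs along $x$ from $p$ back to $p$ and then along $y$. By Lemma~\ref{lem: z with x*y int}, every crossing $q$ of $z$ with $x*_py$ corresponds to a crossing of $z$ with either the $x$-arc or the $y$-arc of this piecewise loop. We may therefore label $q\in z\cap x$ or $q\in z\cap y$, and we compute the sign at the corresponding point of the concatenated loop.

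\textbf{Sign computation.} The sign $\epsilon_q(z,\cdot)$ depends only on the orientation of the pair of tangent vectors at the crossing. For $q\in z\cap x$, the concatenated loop $x*_py$ traverses the $x$-arc in the direction of $x$, so its tangent there is $x'(q)$. Hence $\epsilon_q(z,x*_py)=\epsilon_q(z,x)$, and likewise $\epsilon_q(z,y)$ for $q$ on the $y$-arc. For $x*_py^{-1}$ the $x$-arc is still traversed positively, while the $y$-arc is traversed as $y^{-1}$. Using $\epsilon_q(z,y^{-1})=-\epsilon_q(z,y)$, stated in the preliminaries, gives the second case table.

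\textbf{Transfer to the geodesic.} The real content is that the sign does not change when we pass from the broken loop to the geodesic representative $T_{AB}$. I would argue this with the normalized picture. Straighten the corner at $p$ by a homotopy supported near $p$, inside the convex triangle bounded by $T_A$, $T_B$ and $T_{AB}$. A lift of $z$ entering this region through the $x$-side (respectively the $y$-side) must exit through $T_{AB}$, since it cannot cross $T_A$ or $T_B$ twice. The transverse direction in which it crosses, from one side of the region to the other, is therefore preserved. This yields equality of orientation signs. For $x*_py^{-1}$ we use the second triangle in Figure~\ref{fig:sign x y z}.

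\textbf{Main obstacle and conclusion.} The step I expect to be the main obstacle is making this ``same side'' argument rigorous with respect to the orientation conventions in Figure~\ref{fig:int z to xy}. Since $p$ itself is not a crossing with $z$, neither $\epsilon_p(x,y)$ nor the angle at $p$ enters anywhere in the computation. This gives the final claim, that both signs are independent of $\epsilon_p$.
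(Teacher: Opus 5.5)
\textbf{The gap is in your ``Transfer to the geodesic'' step.} You claim that a lift of $z$ entering the triangle $\Delta$ bounded by $T_A$, $T_B$, $T_{AB}$ through the $x$-side must exit through $T_{AB}$. This does not follow from ``it cannot cross $T_A$ or $T_B$ twice''; that only forbids leaving through the side it entered.

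Consider a lift $\tilde z$ passing close to the vertex $\tilde p=i$, inside the angle of $\Delta$ there. It enters through the $x$-side and leaves through the $y$-side, cutting the corner. Closed geodesics are dense in the unit tangent bundle, so some closed geodesic $z$ has such a lift. That $\tilde z$ crosses the $AB$-invariant lift of the broken loop twice, with opposite signs, and misses $T_{AB}$ inside $\Delta$.

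Consequently, for the geodesic representative of $x*_py$ there is in general no sign-preserving bijection between $z\cap(x*_py)$ and $(z\cap x)\sqcup(z\cap y)$; the former can be strictly smaller. What the triangle does give is the opposite direction. A lift crossing the $T_{AB}$-side leaves $\Delta$ through exactly one of the other two sides. Orienting $\partial\Delta$ so that it agrees with the broken path, and hence is opposite to $T_{AB}$, shows the two signs coincide. This yields only an injection.

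Note also that a homotopy supported near $p$ never reaches $T_{AB}$. If its support misses $z$ it changes no crossing at all. Straightening onto $T_{AB}$ instead sweeps all of $\Delta$ and its translates, which is exactly where corner-cutting crossings disappear.

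\textbf{The remedy is to drop the transfer.} Goldman's formula (Theorem~\ref{thm:Goldman}) holds for arbitrary oriented closed curves meeting transversally in double points, not only geodesics. So represent $x*_py$ (resp.\ $x*_py^{-1}$) by the curve obtained from $x\cup y$ (resp.\ $x\cup y^{-1}$) by resolving the crossing at $p$ inside a small disc disjoint from $z$. Then $z\cap(x*_py)$ is literally $(z\cap x)\sqcup(z\cap y)$, with tangent vectors $x'(q)$ and $\pm y'(q)$. Your ``Sign computation'' paragraph is then the entire proof, including independence of $\epsilon_p$. This reading is also what the proof of Theorem~\ref{thm:sine trace} needs, since there the sums run over all of $z\cap x$ and $z\cap y$.

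If you insist on geodesic representatives, you need an extra cancellation argument. Fix a lift $\tilde z$. Every crossing of $\tilde z$ with the broken path yields the same loop products $z*_q(x*_py)$ and $z*_q(x*_py)^{-1}$. Moreover, the signed count of these crossings equals the signed count of $\tilde z$ with $T_{AB}$. Hence the surplus crossings contribute zero to Goldman's sum.

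The paper's own proof consists of reading the signs off pictures of this same triangle, so your overall route matches the paper's; the smoothed representative is what makes it rigorous.
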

\begin{proof}

   All possible intersection patterns between $x,y$ and $z$ are shown in Figure \ref{fig:sign x y z}. The proof follows from Figure \ref{fig:sign x y z}.
\end{proof}
\begin{theorem}\label{thm:sine trace}
    Suppose $x,y$ and $z$ are three closed geodesics on $\Sigma$ with $\ell_x=a$, $\ell_y=b$ and $\ell_z=c$. Denote $C_u=\cosh\frac u2$ and $S_u=\sinh\frac u2.$ We have
    \begin{align*}
        &\{\tr_z, \{\tr_x, \tr_y\}\}\\ = & \ -2C_aS_b S_c\sum_{p\in x\cap y,q\in z\cap x}\cos\theta_p\cos\theta_q-  2 S_aC_bS_c\sum_{r\in x\cap y,s\in z\cap y}\cos\theta_r\cos\theta_s\\
 &- 2S_aS_bS_c\sum_{p\in x\cap y,q\in z\cap x}\frac{\cosh\big(\ell_{pq}-\frac{a}{2}\big)}{S_a}\sin\theta_p\sin\theta_q
 + 2S_aS_bS_c\sum_{r\in x\cap y,s\in z\cap y}\frac{\cosh\big(\ell_{rs}-\frac{b}{2}\big)}{S_b}\sin\theta_r\sin\theta_s.
    \end{align*}
  where $\ell_{pq}$ (respectively $\ell_{rs}$) denotes the distance from $p$ to $q$ (respectively $r$ to $s$) along $x$ (respectively along $y$). 
\end{theorem}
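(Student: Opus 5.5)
We compute the iterated bracket by applying Goldman's formula (Theorem \ref{thm:Goldman}) twice. By Theorem \ref{thm:Goldman},
\[
\{\tr_x,\tr_y\}=\frac12\sum_{p\in x\cap y}\epsilon_p(x,y)\big(\tr_{x*_py}-\tr_{x*_py^{-1}}\big).
\]
Bilinearity and antisymmetry of the bracket then give
\[
\{\tr_z,\{\tr_x,\tr_y\}\}=\frac12\sum_{p}\epsilon_p(x,y)\big(\{\tr_z,\tr_{x*_py}\}-\{\tr_z,\tr_{x*_py^{-1}}\}\big).
\]
Apply Theorem \ref{thm:Goldman} again to each inner bracket $\{\tr_z,\tr_w\}$ with $w=x*_py$ or $w=x*_py^{-1}$. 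This is a sum over $q\in z\cap w$ of $\tfrac12\epsilon_q(z,w)(\tr_{z*_qw}-\tr_{z*_qw^{-1}})$.

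By Lemma \ref{lem: z with x*y int}, every such $q$ comes from an intersection point of $z$ with $x$ or with $y$. We therefore split the double sum into two families: pairs $(p,q)$ with $q\in z\cap x$, and pairs $(r,s)$ with $s\in z\cap y$. Lemma \ref{lem: sign pat} converts $\epsilon_q(z,w)$ into $\epsilon_q(z,x)$, or into $\pm\epsilon_s(z,y)$, with the sign independent of $p$.

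Next, each loop product is represented in the normalized lifts of Remark \ref{rem:intersection position} by a matrix word. For instance $z*_q(x*_py)$ corresponds to $CAB$, and $z*_q(x*_py)^{-1}$ to $CB^{-1}A^{-1}$. The signs $\varepsilon$ of these products are handled by iterating Lemma \ref{Lem: Sign} (Remark \ref{rem:sign}), so every $\varepsilon$-factor collapses to $\varepsilon_x\varepsilon_y\varepsilon_z$. This cancels the $\varepsilon_A\varepsilon_B\varepsilon_C$ appearing in the trace lemmas, since $\varepsilon_A=\varepsilon_x$ and so on.

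For the first family the four traces, combined with signs $+,-$ from the outer bracket and $+,-$ from the inner bracket, form exactly
\[
\tr\big(CAB-CB^{-1}A^{-1}-CAB^{-1}+CBA^{-1}\big).
\]
Lemma \ref{lem: sum 1}, in the form of Remark \ref{rem: sum 1} with prefactor $\epsilon_p(x,y)\epsilon_q(z,x)$, then yields
\[
-8\Big(C_aS_bS_c\cos\theta_p\cos\theta_q+S_bS_c\cosh\big(\ell_{pq}-\tfrac a2\big)\sin\theta_p\sin\theta_q\Big).
\]
Multiplying by $\tfrac14$ gives the first and third sums of the statement, after writing $S_bS_c=S_aS_bS_c/S_a$.

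For the second family the relative sign flips, because $\epsilon_s(z,x*_py^{-1})=-\epsilon_s(z,y)$. The combination becomes $\tr(ABC-ABC^{-1}+AB^{-1}C-AB^{-1}C^{-1})$ up to cyclic permutation and inversion, and Lemma \ref{lem: sum 2} with prefactor $\epsilon_r(x,y)\epsilon_s(z,y)$ applies. Again multiplying by $\tfrac14$ and using $\epsilon_s(T_C,T_B)=\epsilon_s(z,y)$ gives $-2S_aC_bS_c\cos\theta_r\cos\theta_s+2S_aS_c\cosh(\ell_{rs}-\tfrac b2)\sin\theta_r\sin\theta_s$, which are the second and fourth sums.

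The main obstacle is the bookkeeping in the middle step. We must verify that each loop product $z*_q(x*_py^{\pm1})$ corresponds, with correct basepoints and orientations, to the precise matrix word used in Lemmas \ref{lem: sum 1} and \ref{lem: sum 2}. We must also check that the geometric configuration ($q$ on the positive side of $p$, $s$ on the negative side of $r$) matches the normalization of Remark \ref{rem:intersection position}. I would settle this by drawing the lifts in $\H$ as in Figure \ref{fig:int z to xy} and reading off each word case by case, using cyclic invariance and $\tr(M)=\tr(M^{-1})$ to reduce to the listed forms.
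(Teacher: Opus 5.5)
Your outline follows the paper's proof step for step. The first family ($q\in z\cap x$) is handled correctly: there the base point lies on $x$, whose orientation is never reversed, so passing from $y$ to $y^{-1}$ really just replaces $B$ by $B^{-1}$, and the words $CAB$, $CB^{-1}A^{-1}$, $CAB^{-1}$, $CBA^{-1}$ are the right input for Lemma~\ref{lem: sum 1}.

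The second family contains a genuine error, and the paper's own argument has the same error. Use the normalization of Remark~\ref{rem:intersection position}, in which the lift $\tilde r$ lies a distance $\ell_{sr}$ ahead of $\tilde s$ on $T_B$. Then the loop $x*_ry$ based at $s$ lifts to $AB$. But $x*_ry^{-1}$ based at $s$ runs backwards along $T_B$ and meets $x$ at $B^{-1}\tilde r$, so it lifts to $B^{-1}A=(B^{-1}AB)B^{-1}$, not to $AB^{-1}$. The four words are therefore $CAB$, $CB^{-1}A^{-1}$, $CB^{-1}A$, $CA^{-1}B$. The combination is $\tr(ABC-ABC^{-1}+ACB^{-1}-AC^{-1}B^{-1})$, and $ACB^{-1}$ is not a cyclic permutation of $AB^{-1}C$ or of its inverse.

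Moreover, Lemma~\ref{lem: sum 2} as stated is false, so citing it is not a valid step. Since $\tr(BX)+\tr(B^{-1}X)=\tr B\,\tr X$ in $SL_2(\R)$,
\[
\tr(ABC-ABC^{-1}+AB^{-1}C-AB^{-1}C^{-1})=\tr B\,\big(\tr(AC)-\tr(AC^{-1})\big)=4\varepsilon_A\varepsilon_B\varepsilon_C\,S_aC_bS_c\tr(N_XN_Z).
\]
This has no $S_aS_bS_c$ term. Via Lemma~\ref{lem:triplec2} it yields $S_aC_bS_c\cosh\ell_{sr}$, whereas the theorem needs $S_aS_c\cosh(\ell_{rs}-\frac b2)=S_aS_c(C_b\cosh\ell_{sr}-S_b\sinh\ell_{sr})$. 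Your final formula comes out right only because these two mistakes cancel.

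The repair is exactly the bookkeeping you deferred, and it has to be carried out rather than cited. Expand the correct combination using $A=\varepsilon_A(C_aI+S_aN_X)$, $B^{\pm1}=\varepsilon_B(C_bI\pm S_bN_Y)$ and $C^{\pm1}=\varepsilon_C(C_cI\pm S_cN_Z)$. The $C_aC_bC_c$, $N_XN_Y$ and $N_YN_Z$ terms cancel. Since $N_YN_Z+N_ZN_Y=\tr(N_YN_Z)I$ and $\tr N_X=0$, you get $\tr(N_XN_ZN_Y)=-\tr(N_XN_YN_Z)$, so the triple terms now add instead of cancelling. The result is $8\varepsilon_A\varepsilon_B\varepsilon_C\big(S_aC_bS_c\tfrac12\tr(N_XN_Z)+S_aS_bS_c\tfrac12\tr(N_XN_YN_Z)\big)$. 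Inserting Lemma~\ref{lem:triplec2} and the prefactor $\epsilon_r(T_A,T_B)\epsilon_s(T_C,T_B)$ gives exactly the right-hand side of Lemma~\ref{lem: sum 2}, and the second and fourth sums then follow as you describe.

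Separately, your cancellation of the $\varepsilon$'s (again copied from the paper) is not legitimate. The function $\tr_w(\tilde\Phi)=\tr(\tilde\Phi(w))$ is the signed trace, so $\tr_{z*_q(x*_py)}=\tr(CAB)$ exactly, and no extra factor $\varepsilon_{zxy}$ appears. What the computation actually proves is $\varepsilon_x\varepsilon_y\varepsilon_z$ times the displayed right-hand side. This is consistent with a direct check: changing the lift by a character $\chi\colon\pi_1\to\{\pm1\}$ multiplies the left side by $\chi(x)\chi(y)\chi(z)$ and leaves the right side unchanged. The sign is harmless for the length formulas, where it cancels against $\tr_u=2\varepsilon_u\cosh\frac{\ell_u}{2}$.
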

 \begin{proof} 
 Using Theorem \ref{thm:Goldman} and Leibniz rule, we have 
\begin{flalign*}
  ~ \{\tr_z, \{\tr_x, \tr_y\}\} 
 &=   \{\tr_z,\frac 12\sum_{p\in x\cap y}\epsilon_p(x,y) (\tr_{x*_py}-\tr_{x*_py^{-1}})\} \\
&  = \frac 12\sum_{p\in x\cap y}\epsilon_p(x,y)\left[\{\tr_z, \tr_{x*_py}\}-\{\tr_z,\tr_{x*_py^{-1}}\}\right]
\end{flalign*}

\begin{align*}
& =   \frac 12 \sum_{p\in x\cap y} \epsilon_p(x,y)\bigg[\frac 12\sum_{q\in z\cap (x*_py)}\epsilon_q(z, x*_py)(\tr_{z*_qx*_py}-\tr_{z*_q(x*_py)^{-1}})\bigg] \\  
   &   \hspace{1.5 cm}- \ \frac 12 \sum_{p\in x\cap y} \epsilon_p(x,y)\bigg[ \frac 12 \sum_{q\in z\cap (x*_py^{-1})}\epsilon_q(z, x*_py^{-1})(\tr_{z*_qx*_py^{-1}}-\tr_{z*_q(x*_py^{-1})^{-1}})\bigg] .
   \end{align*}

By Lemma \ref{lem: sign pat} and Lemma \ref{lem: z with x*y int} the right-hand side is equal to

   \begin{align*}
   & =  \frac 12 \sum_{p\in x\cap y} \epsilon_p(x,y)\bigg[ \frac 12\sum_{q\in z\cap x}\epsilon_q(z, x)(\tr_{z*_qx*_py}-\tr_{z*_q(x*_py)^{-1}})\\  
& \hspace{3.5 cm} + \frac 12\sum_{q\in z\cap y}\epsilon_q(z, y)(\tr_{z*_qx*_py}-\tr_{z*_q(x*_py)^{-1}})\bigg]\\
       & \quad   -\frac 12 \sum_{p\in x\cap y} \epsilon_p(x,y)\bigg[\ \frac 12 \sum_{q\in z\cap x}\epsilon_q(z, x)(\tr_{z*_qx*_py^{-1}}-\tr_{z*_q(x*_py^{-1})^{-1}}) \\
       & \hspace{3.5 cm} + \frac 12
        \sum_{q\in z\cap y}-\epsilon_q(z, y)(\tr_{z*_qx*_py^{-1}}-\tr_{z*_q(x*_py^{-1})^{-1}})\bigg]       
  \\&= \frac 14 \sum_{p\in x\cap y} \epsilon_p(x,y) \sum_{q\in z\cap x}\epsilon_q(z, x) \left [ \tr_{z*_qx*_py}-\tr_{z*_qy^{-1}*_px^{-1}} - \tr_{z*_qx*_py^{-1}} + \tr_{z*_q y*_px^{-1}} \right]\\
         &\quad +  \frac 14 \sum_{p\in x\cap y} \epsilon_p(x,y) \sum_{q\in z\cap y}\epsilon_q(z, y) \left [ \tr_{z*_qx*_py}-\tr_{z*_qy^{-1}*_px^{-1}} + \tr_{z*_qx*_py^{-1}} - \tr_{z*_q y*_px^{-1}} \right]  .   
        \end{align*}
       By Lemma \ref{Lem: Sign} and Remark \ref{rem:sign}, 
    $\varepsilon_{zxy}, \varepsilon_{zy^{-1}x^{-1}},\varepsilon_{zxy^{-1}},\varepsilon_{zyx^{-1}}$, are all equal to $\varepsilon_x\varepsilon_y\varepsilon_z$. Therefore, the right-hand side is 
    
        \begin{align*}
       = \ & \frac 14\varepsilon_x\varepsilon_y\varepsilon_z \sum_{p\in x\cap y, q\in z\cap x} \epsilon_p(x,y)\epsilon_q(z, x) \left [ \tr(CAB)-\tr(CB^{-1}A^{-1}) - \tr(CAB^{-1}) + \tr(CBA^{-1}) \right]\\
         &+\frac 14 \varepsilon_x\varepsilon_y\varepsilon_z \sum_{r\in x\cap y,s\in z\cap y} \epsilon_r(x,y)\epsilon_s(z, y) \left [ \tr(CAB)-\tr(CB^{-1}A^{-1}) + \tr(CAB^{-1}) - \tr(CBA^{-1})\right].
         \end{align*} 
          Substituting the expressions from Remark \ref{rem: sum 1} and Lemma \ref{lem: sum 2} we obtain
         \begin{align*}
    =&\frac 14 \varepsilon_x\varepsilon_y\varepsilon_z\sum_{p\in x\cap y,q\in z\cap x}  -8\varepsilon_A\varepsilon_B\varepsilon_C\Big(
 C_aS_bS_c
 \cos\theta_p\cos\theta_q
+S_bS_c\cosh\left(\ell_{pq}-\frac{a}{2}\right)
\sin\theta_p\sin\theta_q
 \Big)\\
 &+ \frac 14 \varepsilon_x\varepsilon_y\varepsilon_z\sum_{r\in x\cap y,s\in z\cap y}8\varepsilon_A\varepsilon_B\varepsilon_C\Big(- S_{a}C_{b}S_{c}\cos\theta_r\cos\theta_s+S_{a}S_{c}\cosh\big((-\ell_{sr})+\frac{b}{2}\big)\sin\theta_r\sin\theta_s\Big).
 \end{align*} 
 Apply $\varepsilon_x^2=\varepsilon_y^2=\varepsilon_z^2=1$ and a further simplification yields
 \begin{align*}
 & \{\tr_z, \{\tr_x, \tr_y\}\}= -2C_aS_b S_c\sum_{p\in x\cap y,q\in z\cap x}\cos\theta_p\cos\theta_q-  2 S_aC_bS_c\sum_{r\in x\cap y,s\in z\cap y}\cos\theta_r\cos\theta_s\\
 &- 2S_aS_bS_c\sum_{p\in x\cap y,q\in z\cap x}\frac{\cosh\big(\ell_{pq}-\frac{a}{2}\big)}{S_a}\sin\theta_p\sin\theta_q
 + 2S_aS_bS_c\sum_{r\in x\cap y,s\in z\cap y}\frac{\cosh\big(\ell_{rs}-\frac{b}{2}\big)}{S_b}\sin\theta_r\sin\theta_s.
 \end{align*} 
 This completes the proof.
\end{proof}

\subsection{Poisson bracket of length functions} We compute the Poisson bracket of length functions using Goldman's theorem. 
Observe that the length function does not depend on the sign of the trace of the lift. Hence, for the discussion below, we assume all the matrices to have positive trace.
\begin{theorem}[Wolpert's cosine formula] \label{thm:cosine form} Suppose $x$ and $y$ are two closed geodesics on $\Sigma$ with $\ell_x=a$, $\ell_y=b$. We have
$$\{\ell_x,\ell_y\}=2\sum\limits_{p\in x\cap y}\cos{\theta}_p.$$
\end{theorem}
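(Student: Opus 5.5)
The plan is to deduce the formula from Theorem \ref{thm:cosine trace} by a chain-rule argument. On the Fuchsian locus we have $\tr_x=2\varepsilon_x\cosh\frac{\ell_x}{2}$, with $\varepsilon_x$ locally constant. Following the convention just fixed, we may take $\varepsilon_x=\varepsilon_y=1$, so that $\tr_x=2C_{\ell_x}$ and $\tr_y=2C_{\ell_y}$ as functions on $\T$. Since $\ell_x>0$ on $\T$, we can invert this as $\ell_x=2\cosh^{-1}(\tr_x/2)$, which expresses $\ell_x$ as a smooth function of $\tr_x$ alone, and likewise for $\ell_y$.

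Next we use the Leibniz rule in its chain-rule form: for smooth one-variable functions $f,g$, the Poisson bracket satisfies $\{f(u),g(v)\}=f'(u)g'(v)\{u,v\}$. Apply this with $u=\tr_x$, $v=\tr_y$ and $f(t)=g(t)=2\cosh^{-1}(t/2)$. We have $f'(t)=2/\sqrt{t^2-4}$. At $t=2\cosh\frac a2$ this gives $\sqrt{t^2-4}=2\sinh\frac a2$, so $f'(\tr_x)=1/\sinh\frac a2$ and similarly $g'(\tr_y)=1/\sinh\frac b2$. Equivalently, one can differentiate $\tr_x=2\cosh\frac{\ell_x}{2}$ to get $d\tr_x=\sinh\frac{\ell_x}{2}\,d\ell_x$, and then $\{\tr_x,\tr_y\}=S_aS_b\{\ell_x,\ell_y\}$.

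Substituting Theorem \ref{thm:cosine trace} then gives
\[
\{\ell_x,\ell_y\}=\frac{1}{S_aS_b}\cdot 2S_aS_b\sum_{p\in x\cap y}\cos\theta_p=2\sum_{p\in x\cap y}\cos\theta_p,
\]
using $S_a,S_b>0$ since $a,b>0$.

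The only delicate point is justifying that the Poisson bracket on $\M$, restricted to the lifted copy of $\T$, is the one relevant for length functions. This needs two facts. First, the Fuchsian locus is a union of components of the smooth part of $\M$, and it maps locally diffeomorphically and symplectically onto $\T\subset\M_W$. Second, $\ell_x$ is a function of $|\tr_x|$, so it does not depend on the choice of lift. Both follow from the continuous choice of lift fixed at the start of this section, and then the chain-rule computation above is legitimate.
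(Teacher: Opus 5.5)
Your proposal is correct and follows the paper's proof essentially step for step. The chain rule gives $\{\tr_x,\tr_y\}=\varepsilon_x\varepsilon_y S_aS_b\{\ell_x,\ell_y\}$, and comparing this with Theorem~\ref{thm:cosine trace} under the normalization $\varepsilon_x=\varepsilon_y=1$ yields the formula.

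One point you share with the paper needs care: that normalization cannot always be arranged by changing the lift. For example, the boundary of an embedded one-holed torus is a commutator, whose trace is $<-2$ under every lift. It is cleaner to keep the signs. Goldman's formula together with Corollary~\ref{cor:double}, applied to the actual lifted matrices, gives $\{\tr_x,\tr_y\}=2\varepsilon_x\varepsilon_y S_aS_b\sum_{p\in x\cap y}\cos\theta_p$, and the factor $\varepsilon_x\varepsilon_y$ then cancels against the one from the chain rule.
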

\begin{proof}
\begin{align*}
    \{\tr_x,\tr_y\}=& \   \{2\varepsilon_x\cosh\frac{\ell_x}{2}, 2\varepsilon_y\cosh\frac{l_y}{2}\}\\
    =&\ 4\varepsilon_x\varepsilon_y \{\cosh\frac{\ell_x}{2},\cosh\frac{\ell_y}{2}\}\\
    =&\  4\varepsilon_x\varepsilon_y  \sinh{\frac{\ell_x}{2}}\sinh{\frac{\ell_y}{2}}\{\frac{\ell_x}{2}, \frac{\ell_y}{2}\}\\
    =&\  \varepsilon_x\varepsilon_y  \sinh{\frac{\ell_x}{2}}\sinh{\frac{\ell_y}{2}}\{\ell_x,\ell_y\}\\
    =&\  2\sinh\frac{a}{2}\sinh \frac{b}{2}\sum_{p\in x\cap y}  \cos\theta_p  \text{  (by Theorem \ref{thm:cosine trace}) }.
\end{align*}  
Combining the last two lines and substituting $\varepsilon_x=\varepsilon_A=1$ and $\varepsilon_y=\varepsilon_B=1$ we obtain the formula. 
\end{proof}

\begin{theorem}[Wolpert's sine formula]\label{thm:sine form}
   Suppose $x,y$ and $z$ are three closed geodesics on $\Sigma$ with $\ell_x=a$, $\ell_y=b$ and $\ell_z=c$. We have $$\{\ell_z,\{\ell_x,\ell_y\}\}=- 2\sum_{p\in x\cap y,q\in z\cap x}\frac{\cosh\big(\ell_{pq}-\frac{a}{2}\big)}{S_a}\sin\theta_p\sin\theta_q
 + 2\sum_{r\in x\cap y,s\in z\cap y}\frac{\cosh\big(\ell_{rs}-\frac{b}{2}\big)}{S_b}\sin\theta_r\sin\theta_s.$$
\end{theorem}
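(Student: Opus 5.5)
The plan is to convert the trace formula of Theorem \ref{thm:sine trace} into a length formula by the chain rule, in the same way Theorem \ref{thm:cosine form} was obtained from Theorem \ref{thm:cosine trace}. Since lengths do not depend on the sign of the lift, all matrices are taken with positive trace. Then $\varepsilon_x=\varepsilon_y=\varepsilon_z=1$, and $\tr_u=2C_{\ell_u}$ with $C_u=\cosh\frac u2$, for $u=x,y,z$.

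\emph{Step 1: expand the iterated bracket of traces in terms of lengths.} The Poisson bracket is a derivation in each slot, so
\[
\{\tr_x,\tr_y\}=S_aS_b\{\ell_x,\ell_y\},
\]
where $S_a,S_b$ are regarded as functions $\sinh\frac{\ell_x}{2}$, $\sinh\frac{\ell_y}{2}$ on $\T$. Applying $\{\tr_z,\cdot\}=S_c\{\ell_z,\cdot\}$ and the Leibniz rule gives
\[
\{\tr_z,\{\tr_x,\tr_y\}\}=S_c\Big(\tfrac12 C_aS_b\{\ell_z,\ell_x\}\{\ell_x,\ell_y\}+\tfrac12 S_aC_b\{\ell_z,\ell_y\}\{\ell_x,\ell_y\}+S_aS_b\{\ell_z,\{\ell_x,\ell_y\}\}\Big).
\]

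\emph{Step 2: evaluate the product terms.} Each bracket $\{\ell_u,\ell_v\}$ is given by Wolpert's cosine formula (Theorem \ref{thm:cosine form}), taking care with orientation conventions. Writing $\{\ell_z,\ell_x\}=-\{\ell_x,\ell_z\}=-2\sum_{q\in z\cap x}\cos\theta_q$, the first term becomes
\[
\tfrac12 C_aS_bS_c\cdot 2\sum_p\cos\theta_p\cdot(-2)\sum_q\cos\theta_q=-2C_aS_bS_c\sum_{p,q}\cos\theta_p\cos\theta_q.
\]
This matches the first cosine-product sum in Theorem \ref{thm:sine trace}, and likewise the second term matches $-2S_aC_bS_c\sum_{r,s}\cos\theta_r\cos\theta_s$.

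\emph{Step 3: cancel and divide.} Subtracting these matched terms leaves
\[
S_aS_bS_c\{\ell_z,\{\ell_x,\ell_y\}\}=-2S_aS_bS_c\sum_{p,q}\frac{\cosh(\ell_{pq}-\frac a2)}{S_a}\sin\theta_p\sin\theta_q+2S_aS_bS_c\sum_{r,s}\frac{\cosh(\ell_{rs}-\frac b2)}{S_b}\sin\theta_r\sin\theta_s.
\]
Since $a,b,c>0$, we have $S_aS_bS_c\neq0$, and dividing gives the claim.

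\emph{Main obstacle.} The delicate point is Step 2: ensuring that the angle and sign conventions used in Theorem \ref{thm:cosine form} for $\{\ell_z,\ell_x\}$ and $\{\ell_z,\ell_y\}$ agree with those producing the cosine-product sums in Theorem \ref{thm:sine trace}. Concretely, the angles $\theta_q$ and $\theta_s$ are measured from $x$ to $z$ and from $y$ to $z$ respectively, which is why the antisymmetry sign appears. I would check this against Remark \ref{rem: sum 1} and Lemma \ref{lem: sum 2}, where the same conventions yield exactly the coefficients $-8$ and $+8$ on the cosine terms.
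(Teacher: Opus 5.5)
Your proposal is correct and is essentially the paper's own argument. You rewrite $\{\tr_z,\{\tr_x,\tr_y\}\}$ via the chain rule and Leibniz rule as $S_c\bigl[\tfrac12 C_aS_b\{\ell_z,\ell_x\}\{\ell_x,\ell_y\}+\tfrac12 S_aC_b\{\ell_z,\ell_y\}\{\ell_x,\ell_y\}+S_aS_b\{\ell_z,\{\ell_x,\ell_y\}\}\bigr]$, evaluate the two product terms with the cosine formula, cancel them against the cosine-product sums in Theorem \ref{thm:sine trace}, and divide by $S_aS_bS_c$. The paper does the same. Your explicit use of $\{\ell_z,\ell_x\}=-\{\ell_x,\ell_z\}$ to reconcile the angle conventions only makes visible a sign step the paper performs silently.
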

\begin{proof}
 Since $\tr$ is a Lie algebra homomorphism, 
 \begin{align*}  & \{\tr_z, \{\tr_x, \tr_y\}\}\\
        =\ & \{2\varepsilon_z   \cosh\frac{\ell_z}{2},\{2\varepsilon_x\cosh\frac{\ell_x}{2}, 2\varepsilon_y\cosh\frac{\ell_y}{2}\} \}\\
       =\ & \{2\varepsilon_z \cosh\frac{\ell_z}{2}, \varepsilon_x\varepsilon_y\sinh{\frac{\ell_x}{2}}\sinh{\frac{\ell_y}{2}}\{\ell_x,\ell_y\} \}\\
       =\ &\varepsilon_x \varepsilon_y \varepsilon_z \sinh{\frac{\ell_z}{2}}\{\ell_z, \sinh{\frac{\ell_x}{2}}\sinh{\frac{\ell_y}{2}}\{\ell_x,\ell_y\} \}\\
       =\ & \varepsilon_x \varepsilon_y \varepsilon_z \sinh{\frac{\ell_z}{2}} \bigg[ \{\ell_z, \sinh\frac{\ell_x}{2}\}\sinh{\frac{\ell_y}{2}}\{\ell_x,\ell_y\}+\sinh{\frac{\ell_x}{2}}\{\ell_z,\sinh\frac{\ell_y}{2}\}\{\ell_x,\ell_y\} \\ 
       & \qquad \qquad \qquad + \sinh{\frac{\ell_x}{2}}\sinh{\frac{\ell_y}{2}}\{\ell_z,\{\ell_x,\ell_y\}\}\big]\\
       =\ & \varepsilon_x \varepsilon_y \varepsilon_z \sinh{\frac{\ell_z}{2}} \Big [\ \frac{1}{2}\cosh\frac{\ell_x}{2}\sinh{\frac{\ell_y}{2}}\{\ell_x,\ell_y\}\{\ell_z, \ell_x\} +\frac{1}{2}\sinh\frac{\ell_x}{2}\cosh\frac{\ell_y}{2}\{\ell_x,\ell_y\}\{\ell_z,\ell_y\} 
       \\ 
    & \qquad \qquad \qquad + \sinh\frac{\ell_x}{2}\sinh\frac{\ell_y}{2}\{\ell_z,\{\ell_x,\ell_y\}\} \Big]\\
    \end{align*}
    Substituting the values of $\{\ell_x,\ell_y\}$ and $\{\ell_x,\ell_z\}$ and $\{\ell_y,\ell_z\}$ using  Theorem \ref{thm:cosine form}) we obtain

    \begin{align*}
  =\ & \varepsilon_x \epsilon_y \varepsilon_z \Big[-2 C_aS_bS_c\sum_{p\in x\cap y, q\in x\cap z}\cos\theta_p\cos\theta_q\\
        &\ \quad\quad -{2}S_aC_bS_c\sum_{p\in x\cap y, q\in y\cap z} \cos\theta_p\cos\theta_q+ S_aS_bS_c\{\ell_z,\{\ell_x,\ell_y\}\}\Big].
\end{align*}  
     Comparing the equation with the value of $\{\tr_z,\{\tr_x,\tr_y\}\}$ in Theorem \ref{thm:sine trace} and substituting $\varepsilon_x=\varepsilon_A=1,$ $\varepsilon_y=\varepsilon_B=1$ and $\varepsilon_z=\varepsilon_C=1$, we obtain  
     $$\{\ell_z,\{\ell_x,\ell_y\}\}=- 2\sum_{p\in x\cap y,q\in z\cap x}\frac{\cosh\big(\ell_{pq}-\frac{a}{2}\big)}{S_a}\sin\theta_p\sin\theta_q
 + 2\sum_{r\in x\cap y,s\in z\cap y}\frac{\cosh\big(\ell_{rs}-\frac{b}{2}\big)}{S_b}\sin\theta_r\sin\theta_s.$$ 
     \end{proof}    
\begin{remark}
    To obtain the standard form of Wolpert's sine formula, observe the following.  Suppose $l_1=\ell_{pq}$ and $l_2=\ell_x-l_1$, then  $l_1+l_2=\ell_x$  and $ \displaystyle\frac{\cosh\left(l_1-\frac{\ell_x}{2}\right)}
{\sinh\frac{\ell_y}{2}}=\frac{e^{l_1}+e^{l_2}}{(e^{\ell_x}-1)}.$ Similarly if we consider
$ m_1=\ell_{rs}$  and $m_2=\ell_y-m_1$, then $m_1+m_2=\ell_y$ and $\displaystyle\frac{\cosh\left(m_1-\frac {\ell_y}{2}\right)}{\sinh\frac{\ell_y}{2}}=\frac{e^{m_1}+e^{m_2}}{(e^{\ell_y}-1)}.$ Therefore by Theorem \ref{thm:sine form},
\begin{align*}
    \{\ell_z,\{\ell_x,\ell_y\}\}=  2\sum_{r\in x\cap y,s\in z\cap y}\frac{e^{m_1}+e^{m_2}}{(e^{\ell_y}-1)}\sin\theta_r\sin\theta_s- 2\sum_{p\in x\cap y,q\in z\cap x}\frac{e^{l_1}+e^{l_2}}{(e^{\ell_x}-1)}\sin\theta_p\sin\theta_q.
\end{align*}
\end{remark}
\subsection{Convexity of length functions}
As a consequence of our results, we prove that geodesic length functions are convex along Fenchel-Nielsen twist deformations about simple closed curves. Observe that although the Atiyah-Bott-Goldman symplectic form agrees with the Weil-Petersson symplectic form only up to a nonzero scalar, the sign of this scalar does not affect iterated Poisson brackets involving three functions.

\begin{proposition}\label{rem: sim geo}  Let $x$ be a simple geodesic, then
\begin{align*}
   \{\tr_x, \{\tr_x, \tr_y\}\} = -2S_a^2C_b\left(\sum_{p\in x\cap y}\cos\theta_p\right)^2+ 2S_a^2S_b\sum_{p,q\in x\cap y}\frac{\cosh\big(\ell_{pq}-\frac{b}{2}\big)}{S_b}\sin\theta_p\sin\theta_q.
\end{align*}
\end{proposition}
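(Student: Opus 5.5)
The proposition should be the special case $z=x$ of Theorem \ref{thm:sine trace}, so I plan to specialize that theorem rather than redo the trace computations.

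Set $z=x$, so $c=a$, $S_c=S_a$, $C_c=C_a$. The theorem then has two families of terms: those indexed by $q\in z\cap x=x\cap x$ and those indexed by $s\in z\cap y=x\cap y$. Since $x$ is simple, it has no transverse self-intersections, and I expect the first and third sums of Theorem \ref{thm:sine trace} to vanish.

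This vanishing is the step I expect to be the main obstacle. The issue is in the proof of Theorem \ref{thm:sine trace}, which uses Lemma \ref{lem: z with x*y int}: the geodesic representative of $x*_py$ meets $z=x$ only near points of $x$ or of $y$. Near the $x$-portion of $x*_py$, that geodesic runs close to $x$ but is not $x$ itself. I would argue as follows. In the universal cover, the lift of $x*_py$ has endpoints on one side of the lift $T_A$ of $x$, as in Figure \ref{fig:int z to xy}. Simplicity of $x$ means that no other lift of $x$ separates these endpoints except those crossing $T_B$. So every transverse intersection of $x$ with $x*_py^{\pm1}$ comes from a point of $x\cap y$. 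Equivalently, the Goldman sum $\{\tr_x,\tr_{x*_py^{\pm1}}\}$ only picks up the $y$-arc of the loop. I would first try to make this rigorous by perturbing $x*_py$ to a curve that follows a parallel push-off of $x$, which is disjoint from $x$, together with the arc $y$. Theorem \ref{thm:Goldman} is stated for transversal double points, and the transverse intersections with $x$ are then exactly the points of $x\cap y$ along the $y$-arc.

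Granting this, only the second and fourth sums survive, with $r,s$ both ranging over $x\cap y$. The second sum becomes
\[
-2S_aC_bS_a\sum_{r,s\in x\cap y}\cos\theta_r\cos\theta_s=-2S_a^2C_b\Big(\sum_{p\in x\cap y}\cos\theta_p\Big)^2.
\]
The fourth sum becomes $2S_a^2S_b\sum_{p,q}\frac{\cosh(\ell_{pq}-\frac b2)}{S_b}\sin\theta_p\sin\theta_q$, where $\ell_{pq}$ is measured along $y$.

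Before writing this down I must reconcile the angle conventions. In Case II, $\theta_s$ is measured from $T_B$ to $T_C=T_A$, whereas in the proposition all angles are measured from $x$ to $y$. Reversing the order of two oriented geodesics replaces $\theta$ by $2\pi-\theta$ (or $\pi-\theta$ if an orientation is reversed). Correspondingly, $\epsilon_s(z,y)$ becomes $-\epsilon_s(y,x)$ when $z=x$. I will check that these sign changes cancel in the products $\epsilon_r\epsilon_s\cos\theta_s$ and $\epsilon_r\epsilon_s\sin\theta_s$ appearing in Lemma \ref{lem: sum 2}. If they do, the stated formula follows. If a sign survives, it would show up as a global sign on the sine term, and I would trace it back through Lemma \ref{lem:triplec2}.
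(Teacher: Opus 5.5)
You follow the paper's route exactly. You set $z=x$ in Theorem~\ref{thm:sine trace}, drop the sums over $z\cap x$ because $x$ is simple, and square the surviving cosine sum. Your push-off justification of the vanishing is fine, and more careful than the paper's ``$x\cap x=\varnothing$''.

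The gap is the angle bookkeeping you postponed, and it does not come out in favour of the statement. In Theorem~\ref{thm:sine trace} the angle $\theta_s$, $s\in z\cap y$, is the Case~II angle, measured anticlockwise from $y$ to $z$. With the conventions of Lemma~\ref{lem:double}, these are angles in $(0,\pi)$ between the underlying lines, since the intersection signs have already been absorbed. So swapping the order of the two curves replaces $\theta$ by $\pi-\theta$, not $2\pi-\theta$. Hence for $z=x$ one has $\theta_s=\pi-\theta'_s$, where $\theta'_s$ is the angle from $x$ to $y$ used in the proposition. This leaves $\sin\theta_s$ unchanged, and $\cosh(\ell-\frac b2)$ is symmetric under $\ell\mapsto b-\ell$. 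But $\cos\theta_s=-\cos\theta'_s$. So a sign does survive, on the cosine term rather than on the sine term as you predict, and the specialization actually gives $+2S_a^2C_b\big(\sum_{p\in x\cap y}\cos\theta_p\big)^2$. Your display ``$-2S_a^2C_b(\sum\cos\theta_p)^2$'' is precisely the unconverted substitution.

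So the missing step cannot be supplied: the identity as printed is false, and the paper's one-line proof makes the same unconverted substitution before invoking the binomial formula. There are two independent checks, using positive-trace lifts as in this section.

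First, the Leibniz and chain rules give $\{\tr_x,\{\tr_x,\tr_y\}\}=\tfrac12S_a^2C_b\{\ell_x,\ell_y\}^2+S_a^2S_b\{\ell_x,\{\ell_x,\ell_y\}\}$. By Theorem~\ref{thm:cosine form}, the first term is the nonnegative quantity $2S_a^2C_b(\sum_p\cos\theta_p)^2$.

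Second, suppose $x\cap y=\{p\}$. Applying Theorem~\ref{thm:Goldman} twice gives $\{\tr_x,\{\tr_x,\tr_y\}\}=\tfrac14\big(\tr_{x^2y}+\tr_{x^2y^{-1}}-2\tr_y\big)$. The identity $\tr(UV)+\tr(UV^{-1})=\tr U\,\tr V$ turns this into $\tfrac14\tr_y\big(\tr_x^2-4\big)=2S_a^2C_b$, independent of the angle. The printed right-hand side equals $-2S_a^2C_b\cos2\theta_p$. The corrected one, $2S_a^2C_b(\cos^2\theta_p+\sin^2\theta_p)$, matches.

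What your argument, and the paper's, really proves is the formula with $+2S_a^2C_b\big(\sum_p\cos\theta_p\big)^2$. The sine term, and therefore Theorem~\ref{prop: simple tr}, is unaffected.
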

\begin{proof}
  Since $x$ is simple, we have $x\cap x=\varnothing$. Replacing $z$ by $x$ in the proof of Theorem \ref{thm:sine trace}, we can deduce the following:

  \begin{align*}
   \{\tr_x, \{\tr_x, \tr_y\}\} = -2S_a^2C_b\sum_{p,q\in x\cap y}\cos\theta_p\cos\theta_q+ 2S_a^2S_b\sum_{p,q\in x\cap y}\frac{\cosh\big(\ell_{pq}-\frac{b}{2}\big)}{S_b}\sin\theta_p\sin\theta_q.
\end{align*} 
The binomial formula completes the proof.
\end{proof}

\begin{theorem}\label{prop: simple tr}
    Let $x$ be a simple geodesic and $y$ be any geodesic such that $x\cap y\neq \emptyset$. Then 
    $\{\ell_x,\{\ell_x,\ell_y\}\}> 0$.
\end{theorem}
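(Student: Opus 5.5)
Set $z=x$ in Theorem \ref{thm:sine form} and show the result is a sum of positive terms. Since $x$ is simple, $x\cap x=\emptyset$, so the first sum (over $q\in z\cap x$) disappears. Only the second sum remains:
$$\{\ell_x,\{\ell_x,\ell_y\}\}=2\sum_{r,s\in x\cap y}\frac{\cosh\big(\ell_{rs}-\frac{b}{2}\big)}{S_b}\sin\theta_r\sin\theta_s .$$
The same identity can be read off from Proposition \ref{rem: sim geo}. Expand the left side with the Leibniz computation from the proof of Theorem \ref{thm:sine form}. The $\cos\theta_p\cos\theta_q$ terms then cancel exactly against the term $-2S_a^2C_b(\sum\cos\theta_p)^2$, leaving $S_a^2S_b\{\ell_x,\{\ell_x,\ell_y\}\}$ equal to the sine part.

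The key step is the sign of the angles. Since $x$ is simple, we may orient it once; $y$ crosses it at each point $r$. If all angles $\theta_r$, measured anticlockwise from $x$ to $y$, can be taken in $(0,\pi)$, then every $\sin\theta_r>0$. The conventions of Lemma \ref{lem:triplec2} and Remark \ref{rem:intersection position} place the normalized lifts this way: $T_B$ is taken as the $Y$-axis and the angle is measured in the fixed orientation of $\H$. Then every product $\sin\theta_r\sin\theta_s$ is positive, including the diagonal terms $r=s$, which equal $\sin^2\theta_r>0$ because the geodesics meet transversally. Also $\cosh(\cdot)>0$ and $S_b>0$. The sum is nonempty because $x\cap y\neq\emptyset$, so it is strictly positive.

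The main obstacle is justifying that $\sin\theta_r\sin\theta_s\ge 0$ for $r\neq s$, i.e.\ that all $\sin\theta_r$ have a common sign. The worry is that a reversed crossing direction might be absorbed into $\theta_r$ rather than into the signs $\epsilon_r$. I would check that in the derivation of Lemma \ref{lem: sum 2} the signs $\epsilon_r\epsilon_s$ were factored out exactly so that the $\theta$'s are unoriented angles in $(0,\pi)$ between the oriented axes. I would confirm this against Figure \ref{fig: 3 int geod y_z}, where reversing an axis changes $N$ to $-N$ rather than changing the angle. Finally, the proportionality constant between the Atiyah-Bott-Goldman and Weil-Petersson forms enters the double bracket squared, so positivity transfers to the Weil-Petersson setting.
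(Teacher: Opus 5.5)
Your proposal is correct and is essentially the paper's proof. Both specialize Theorem \ref{thm:sine form} (equivalently Proposition \ref{rem: sim geo}) to $z=x$, drop the sum over $x\cap x$ because $x$ is simple, and get positivity from $\cosh(\cdot)>0$, $S_b>0$ and $0<\theta_r,\theta_s<\pi$ over the nonempty index set $x\cap y$. Your extra check that orientations are carried by the signs $\epsilon$, not by the angles, is exactly the convention the paper uses implicitly. Your displayed identity also lacks the stray factor $S_a^2S_b$ that appears in the paper's display, and is the one Theorem \ref{thm:sine form} actually gives.
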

\begin{proof}
    By Theorem \ref{thm:sine form} and Proposition \ref{prop: simple tr}, we obtain
    \begin{align*}
         \{\ell_x,\{\ell_x,\ell_y\}\}= 2S_a^2S_b\sum_{p,q\in x\cap y}\frac{\cosh\big(\ell_{pq}-\frac{b}{2}\big)}{S_b}\sin\theta_p\sin\theta_q.
    \end{align*}
   Since, $\cosh x\geq 1$, $ 0<\theta_p<\pi$ and $0<\theta_q<\pi$ the statement follows. 
\end{proof}

\bibliographystyle{amsplain}

\bibliography{gla.bib}
\end{document}